\documentclass[11pt,a4paper,twoside]{amsart}

\usepackage{graphicx}
\usepackage{amsfonts}
\usepackage{amssymb}
\usepackage{amsthm}
\usepackage[centertags]{amsmath}
\usepackage{newlfont}
\usepackage{hyperref}
\usepackage{color}
\usepackage{graphicx,color}
\usepackage{amsmath, amssymb, graphics}

\def\r{\mathbb R}
 
\def\s{\mathbb S}
\def\t{\mathbf t}
\def\n{\mathbf n}
\def\b{\mathbf b}
\def\c{\mathbf c}

\newtheorem{theorem}{Theorem}[section]

\newtheorem{corollary}[theorem]{Corollary}

\theoremstyle{definition}
\newtheorem{definition}[theorem]{Definition}

\newtheorem{remark}[theorem]{Remark}

\title {Circles-foliated stationary surfaces of the Dirichlet energy} 
\author{Rafael L\'opez}  
\address{Departamento de Geometr\'{\i}a y Topolog\'{\i}a\\  Universidad de Granada. 18071 Granada, Spain} 
\email{rcamino@ugr.es}

\keywords{Dirichlet energy, anisotropic mean curvature, cyclic surface.}

\subjclass{53A10, 53C42, 49Q10}

\begin{document} 
 
\begin{abstract}  
In Euclidean space we study surfaces with constant anisotropic mean curvature $\Lambda$ of the Dirichlet  energy $\int_\Omega( |Du|^2+\Lambda u)$.  We prove the existence of non-rotational surfaces with $\Lambda=0$ and foliated  by a one-parameter family of circles contained in horizontal planes obtaining a   geometric description of them. These surfaces extend the known Riemann  examples of the theory of minimal surfaces to the anisotropic context of the Dirichlet energy. More general, we   classify all  surfaces with zero anisotropic mean curvature   foliated by circles  proving that either the surface is axially symmetric about  the $z$-axis or the surface belongs to one of the above examples. We also study the case that the anisotropic mean curvature is a non-zero constant. 
\end{abstract} 
\maketitle

\section{Introduction and statement of the results} \label{intro}

Let $\Sigma$ be a surface in Euclidean space $\r^3$ given as the graph of a smooth function $u=u(x,y)$ defined in a bounded domain $\Omega\subset\r^2$, where $(x,y,z)$ stand for the canonical coordinates of $\r^3$. If $\Lambda\in\r$, consider the energy functional 
$$E[u]=\int_\Omega |Du|^2+\Lambda\int_\Omega u.$$
The first term of $E$ is the {\it Dirichlet energy} of $u$. The integral $\int_\Omega u$ represents the volume enclosed by $\Sigma$ with $\Omega\times\{0\}$. Viewing $\Lambda$ as a Lagrange multiplier,  a critical point $\Sigma$ of $E$ is a critical point of the Dirichlet energy for variations that preserve the volume of $\Sigma$. Using calculus of variations, $u=u(x,y)$ is a critical point of $E$ 
if and only if $u_{xx}+u_{yy}=\Lambda/2$. Thus if $\Lambda=0$, critical points   are locally graphs of harmonic functions on domains of $\r^2$.  We can write $E[u]$ as an integral on $\Sigma$ by observing that the unit normal $\nu=(\nu_1,\nu_2,\nu_3)$ of $\Sigma$ is $\nu=(-Du,1)/\sqrt{1+|Du|^2}$. Then the Dirichlet energy is a functional depending on $\nu_3$, namely,  
\begin{equation}\label{fu}
\mathcal{F}(\Sigma)=\int_\Sigma \left(\dfrac{1}{\nu_3}-\nu_3\right)\, d\Sigma,
\end{equation}
 where $d\Sigma$ is the area element of $\Sigma$. More generally, if $X\colon\Sigma\to\r^3$  is an immersion of an oriented surface,   one can consider energy functionals of type  $\mathcal{F}(X)=\int_\Sigma F(\nu)\, d\Sigma$, where $F\colon U\subset\s^2\to\r^+$ is a positive smooth function on the unit sphere $\s^2$. These functionals appear in some fluid phenomenons in physics and are called of {\it anisotropic} type because the surface tensions of interfaces $\Sigma$ depend on the normal direction $\nu$ of $\Sigma$ \cite{ta}. 
 
The function $F$ given in \eqref{fu}  satisfies an elliptic condition in the sense that the matrix  $A:=D^2F+F\, \mbox{Id}$ is positive definite, where  $D^2F$ is the Hessian of $F$. Ellipticity of $F$ implies that the map $\xi\colon\s^2\to\r^3$ defined by $\xi(\nu)=DF(\nu)+F(\nu)\nu$, where $DF$ is the gradient of $F$ in $\s^2$, defines a   convex surface called the {\it Wulff shape}. For general    elliptic energies $\mathcal{F}$, if  the domain of $F$ is $U=\s^2$, then the Wulff shape is an ovaloid. The Wulff shape is the unique global minimizer to $\mathcal{F}$ when a volume constraint is fixed. For the Dirichlet energy \eqref{fu}, and since $\nu_3\not=0$, the function $F$ is   defined only in a hemisphere of $\s^2$ being the Wulff shape  the paraboloid of equation $z=x^2+y^2$, after translations and rescalings  in $\r^3$. It was in \cite{re} where critical points of energies $\mathcal{F}(X)=\int_\Sigma F(\nu)\, d\Sigma$ were studied from the differential-geometric view point. The Dirichlet energy  \eqref{fu} appears in   \cite[p. 709]{re}.  
 
Critical points of a functional $\mathcal{F}$ for compactly supported volume-preserving variations are characterized by the property that the function 
$$\mbox{trace}_\Sigma Ad\nu=-2HF+\mbox{div}_\Sigma DF$$
 is constant, where $H$ is the mean curvature of $X$. 
 
 \begin{definition} Let $\mathcal{F}$ be  an anisotropic surface energy. The {\it anisotropic mean curvature} of a surface $\Sigma$  immersed in $\r^3$ is defined by 
\begin{equation}\label{L}
\Lambda=2HF-\mbox{div}_\Sigma DF.
\end{equation}
\end{definition}
A {\it stationary surface}, or critical point, of the functional $\mathcal{F}$ is a surface with constant anisotropic mean curvature (CAMC). This  extends the notion of the surfaces of constant mean curvature (CMC) because when    $F\equiv 1$, the functional $\mathcal{F}$ is simply the area of the surface and  $\Lambda=2H$.   A CAMC surface with $\Lambda=0$ is called   {\it anisotropic minimal surface}.   By its implications in physics, the stability of CAMC surfaces  has received special attention. We refer to the reader a series of works by  Koiso and Palmer   \cite{kp2,kp3,kp5} and references therein.  See also \cite{bs,gmt,gx,jw,ro}.

In this paper we ask for the existence of stationary surfaces of the Dirichlet energy \eqref{fu}    constructed by a foliation of spatial circles of $\r^3$.   The motivation of this question comes from the theory of   CMC  surfaces. In general, a surface $\Sigma$ of $\r^3$ is said to be a {\it cyclic surface} if $\Sigma$ is  foliated by a smooth one-parameter family of circles, where the circles   need not have the same radius.   Each one of the planes containing the circles are called planes of the foliation. A cyclic surface can be also viewed as a surface constructed by the movement in $\r^3$ of a circle whose radius and center go changing along the movement. First examples of cyclic surfaces are the surfaces of revolution. In this case, the circles are   situated in parallel planes and all them are coaxial, that is, the curve of centers is a straight-line orthogonal to each plane of the foliation. 

The classification of cyclic surfaces with constant mean curvature $H$ is known. If the surface is rotational, then it is a Delaunay surface: planes and catenoid ($H=0$) and cylinders, spheres, unduloids and nodoids  ($H\not=0$).  Besides the surfaces of revolution, the only cyclic CMC  surfaces are a family of minimal surfaces ($H=0$) discovered by Riemann \cite{ri}  and the round sphere $(H\not=0)$: \cite{en1,en2,ni}. Notice that any smooth one-parameter family of planes intersecting a sphere makes a foliation of the sphere by circles. The Riemann minimal examples play a remarkable role in the theory of minimal surfaces. These surfaces are embedded with an infinite number of planar ends. They alsoare  periodic along a discrete group of translations. Generalizations of this classification to arbitrary dimensions and other ambients spaces have been  obtained. Without aiming to give a complete list, we refer  \cite{ja1,ja2,lls,lo2,lo3,lo1,pa}.

The objective in this paper is the classification of all   cyclic CAMC surfaces of the Dirichlet energy \eqref{fu}. Specially, our interest are the anisotropic minimal surfaces according to the Riemann minimal examples in the isotropic case.  

The main result in this paper is to prove the existence of a family of non-rotational anistropic minimal surface foliated by circles contained in parallel planes. In fact, these planes must be  parallel to the $xy$-plane. However, if the anisotropic mean curvature is a non-zero constant, we prove that the surface must be rotational.  

\begin{theorem}\label{T1}
Let $\Sigma$ be an CAMC   surface of the Dirichlet energy \eqref{fu}. Assume that  $\Sigma$ is foliated by circles contained in planes parallel to the $xy$-plane.
\begin{enumerate}
\item If $\Lambda\not=0$, then  $\Sigma$ is a surface of revolution about  an axis parallel to  the $z$-axis.
\item If $\Lambda=0$, then $\Sigma$ is a surface of revolution about an axis parallel to the $z$-axis (a horizontal plane $z=c$ or  $z=c\log(\sqrt{x^2+y^2})$, $c\not=0$) or the surface is parametrized by 
$$X(s,\theta)=(a(s),b(s),s)+r(s)(\cos\theta,\sin\theta,0),$$
where we have three types of families of surfaces. After translations of $\r^3$, they are the following:
\begin{enumerate}
\item Type I. \begin{equation}\label{sol1}
r(s) =\frac{ c}{\sqrt{\lambda^2+\mu^2}\cos (cs)},\quad 
 a(s)=\frac{ c\lambda \tan(cs)}{\lambda^2+\mu^2},  \quad b(s)=\frac{ c\mu \tan(cs)}{\lambda^2+\mu^2}.
  \end{equation}
 \item Type II.
\begin{equation}\label{sol2}
r(s) =\frac{ 1}{\sqrt{\lambda^2+\mu^2}s+c},\quad 
 a(s)=-\frac{ \lambda  }{(\lambda^2+\mu^2)s+c\sqrt{\lambda^2+\mu^2}}, \quad  b(s)=-\frac{ \mu  }{(\lambda^2+\mu^2)s+c\sqrt{\lambda^2+\mu^2}}.
  \end{equation}
 
\item Type III.

\begin{equation}\label{sol3}
r(s) =\frac{ c}{\sqrt{\lambda^2+\mu^2}\sinh (cs)},\quad 
 a(s)=-\frac{ c\lambda \coth(cs)}{\lambda^2+\mu^2}, \quad  b(s)=-\frac{ c\mu \coth(cs)}{\lambda^2+\mu^2}.
  \end{equation}

\end{enumerate}
For all these solutions,     $c,\lambda,\mu\in\r$ with the condition $c,\lambda^2+\mu^2\not=0$. The domain $I\subset\r$ of $s$  is given to make sense to the functions $r$, $a$ and $b$. In the three cases, all surfaces are embedded and the curve of centers is included in a vertical plane, which it is a plane of symmetry of the surface.

\end{enumerate}
\end{theorem}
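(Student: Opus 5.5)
The plan is to reduce the CAMC condition to an explicit PDE and then to a finite system of ODEs in the functions $r,a,b$. By hypothesis $\nu_3\neq 0$, since $F$ is only defined on the open upper hemisphere. Hence $\Sigma$ is locally the graph of a function $u=u(x,y)$. As recalled in the introduction, being CAMC means $u_{xx}+u_{yy}=\Lambda/2$. The hypothesis that the leaves are horizontal circles says that the level sets $\{u=s\}$ are circles. So I write $\Sigma$ as
$$X(s,\theta)=(a(s),b(s),s)+r(s)(\cos\theta,\sin\theta,0),$$
and describe $u$ implicitly by
$$G(x,y,s)=(x-a(s))^2+(y-b(s))^2-r(s)^2=0,\qquad s=u(x,y).$$

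\textbf{Deriving the identity.} Implicit differentiation gives $u_x=-G_x/G_s$ and $u_y=-G_y/G_s$, and similarly for the second derivatives. Evaluating on the circle $x=a+r\cos\theta$, $y=b+r\sin\theta$ gives
$$G_x=2r\cos\theta,\qquad G_y=2r\sin\theta,\qquad G_s=-2r(r'+a'\cos\theta+b'\sin\theta).$$
Multiplying $\Delta u-\Lambda/2=0$ by $G_s^3$ turns it into an identity
$$\sum_{k=0}^{3}\big(A_k(s)\cos(k\theta)+B_k(s)\sin(k\theta)\big)=0$$
for all $\theta$. Each coefficient $A_k,B_k$ is an explicit polynomial expression in $r,a,b$ and their first two derivatives. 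Their vanishing is the system of ODEs to solve.

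\textbf{Case $\Lambda=0$.} I expect the coefficient equations to be equivalent to two first-order relations and one second-order relation:
$$a'=\lambda r^2,\qquad b'=\mu r^2,\qquad r r''-2r'^2=-(\lambda^2+\mu^2)\,r^4\quad(\text{or an equivalent equation}),$$
with constants $\lambda,\mu$. The case $\lambda=\mu=0$ gives coaxial circles. The equation for $r$ then integrates to the plane or the logarithmic graph $z=c\log\sqrt{x^2+y^2}$.

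When $\lambda^2+\mu^2\neq0$, the substitution $w=1/r$ should linearize the $r$-equation to $w''=\epsilon\, w$ up to the factor $\lambda^2+\mu^2$. The sign of the first-integral constant then splits the solutions into three cases: a cosine, a linear function, or a hyperbolic sine. Integrating $a'=\lambda r^2$ and $b'=\mu r^2$ produces $\tan$, $-1/(\cdot)$ and $-\coth$ respectively, which are Types I, II and III. The constants $\lambda,\mu$ are absorbed by translations.

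The geometric claims then read off directly. Since $(a',b')$ is proportional to the fixed vector $(\lambda,\mu)$, the centers lie in a vertical plane, and reflection across that plane preserves each circle, so it is a plane of symmetry. Embeddedness follows because each horizontal plane meets $\Sigma$ in a single circle: the map $s\mapsto$ height is injective.

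\textbf{Case $\Lambda\neq0$.} The same coefficient equations now acquire extra terms of the form $\Lambda r^3(\cdots)^3$ coming from $G_s^3$. I will compare the highest-order harmonics $\cos3\theta$ and $\sin3\theta$ to show $a'=b'=0$. These harmonics come only from $(a'\cos\theta+b'\sin\theta)^3$ multiplied by $\Lambda r^3$, together with terms that the $\Lambda=0$ structure already controls. It follows that $a$ and $b$ are constant, so the circles are coaxial and $\Sigma$ is rotational.

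\textbf{Main obstacle.} The main difficulty is the bookkeeping in the expansion of $G_s^3\,\Delta u$ into Fourier modes, and checking that the resulting overdetermined system really collapses to the stated first integrals. In particular, for $\Lambda\neq0$ one must ensure that no non-coaxial solution survives. I will first separate the coefficients into degrees three, two, one and zero in $(\cos\theta,\sin\theta)$. I will then solve the top-degree ones first, since these should force $a'\,b'$-type relations, and only afterwards return to the lower-degree coefficients.
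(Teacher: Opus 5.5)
Your route differs from the paper's, and the reduction itself is sound. The paper evaluates the anisotropic mean curvature of the parametrization directly through \eqref{h3}, which requires the Wulff-shape quantities $1/\mu_1=2/\nu_3^3$, $1/\mu_2=2/\nu_3$, the frame $E_1,E_2$ and its coefficients $c_{ij}$. You use instead the Euler--Lagrange equation $\Delta u=\Lambda/2$ of the graph together with implicit differentiation of $G$. That is shorter and needs none of this machinery; the paper's parametric set-up pays off later, in Sect.~4, for non-horizontal foliations. Note that $G_s=-2rP$ with $P=r'+a'\cos\theta+b'\sin\theta$, and $G_s\neq0$ is exactly the condition $\nu_3\neq0$. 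Carrying out your expansion, the quadratic terms cancel and you get
\[
G_s^3\Big(\Delta u-\frac{\Lambda}{2}\Big)=0\iff 2\big[(2a'r'-ra'')\cos\theta+(2b'r'-rb'')\sin\theta+a'^2+b'^2+r'^2-rr''\big]=\Lambda rP^3,
\]
which is precisely the paper's identity. In particular the $\Lambda$-free part has no harmonics of order $2$ or $3$, so your $\Lambda\neq0$ argument is clean. Writing $a'\cos\theta+b'\sin\theta=\rho\cos(\theta-\phi)$, the only third harmonic is $\frac14\Lambda r\rho^3\cos 3(\theta-\phi)$, which forces $\rho=0$.

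The problem is the $\Lambda=0$ step, which you left as an expectation and stated incorrectly. The three coefficient equations are $ra''=2a'r'$, $rb''=2b'r'$ and $rr''-r'^2=a'^2+b'^2$. The first two say $(a'/r^2)'=(b'/r^2)'=0$, so $a'=\lambda r^2$, $b'=\mu r^2$, and therefore
\[
rr''-r'^2=(\lambda^2+\mu^2)r^4 ,
\]
not $rr''-2r'^2=-(\lambda^2+\mu^2)r^4$. These are not equivalent, and your version breaks everything downstream:
\begin{itemize}
\item For $\lambda=\mu=0$ it makes $1/r$ affine in $s$, i.e.\ $z=\alpha/\sqrt{x^2+y^2}+\beta$. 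This is not harmonic, and you would not get the logarithm.
\item For $\lambda^2+\mu^2\neq0$ the substitution $w=1/r$ turns it into $ww''=\lambda^2+\mu^2$. This is nonlinear, and it is violated by the Type II solution ($w$ affine).
\end{itemize}

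With the correct equation, $w=1/r$ gives $w'^2-ww''=\lambda^2+\mu^2$. Differentiating gives $w'w''-ww'''=0$, i.e.\ $(w''/w)'=0$, so $w''=\epsilon w$ subject to the first integral $w'^2-\epsilon w^2=\lambda^2+\mu^2$. After a shift of $s$, the cases $\epsilon=-c^2,0,c^2$ give
\[
w=\frac{\sqrt{\lambda^2+\mu^2}}{c}\cos(cs),\qquad w=\sqrt{\lambda^2+\mu^2}\,s+c,\qquad w=\frac{\sqrt{\lambda^2+\mu^2}}{c}\sinh(cs).
\]
Integrating $a'=\lambda r^2$, $b'=\mu r^2$ then yields Types I--III. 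What translations absorb is the additive constants of $a,b$ and the shift in $s$, not $\lambda,\mu$. So your plan works, but only once you compute the coefficient equations rather than guess them.
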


Following with the particular case $\Lambda=0$, the following result proves that for a cyclic anisotropic minimal surface,  the planes of the foliation  are all parallel to the $xy$-plane.

\begin{theorem} \label{T2}
Let $\Sigma$ be an anisotropic minimal surface   of the Dirichlet energy \eqref{fu}. If $\Sigma$ is   a cyclic surface, then the planes of the foliation are all parallel to the $xy$-plane.  
\end{theorem}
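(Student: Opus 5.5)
The plan is to write $\Sigma$ locally in the form used for cyclic surfaces in the isotropic theory. We then express the condition $\Lambda=0$ as a trigonometric polynomial identity in the circle parameter, and force all its coefficients to vanish.

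Since $\Lambda=0$ means exactly that $\Sigma$ is locally the graph of a harmonic function, it is convenient to use an implicit formulation. If $\Sigma$ is locally the zero set $\{G=0\}$ with $G_z\neq 0$, then $u_x=-G_x/G_z$ and $u_y=-G_y/G_z$. The equation $u_{xx}+u_{yy}=0$ becomes
$$G_z^2(G_{xx}+G_{yy})-2G_z(G_xG_{xz}+G_yG_{yz})+(G_x^2+G_y^2)G_{zz}=0 .$$
Where $\nu_3=0$ the functional is not defined, so we work on the open set where $\nu_3\neq0$ and conclude by analyticity. Since this equation is invariant only under translations of $\r^3$ and rotations about the $z$-axis, we may normalize with these motions only. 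The vertical direction is distinguished, which is precisely what should force the planes to be horizontal.

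\textbf{Step 1: the planes of the foliation are parallel.} Parametrize $\Sigma$ as $X(s,\theta)=\c(s)+r(s)(\cos\theta\,\n(s)+\sin\theta\,\b(s))$, where $\t(s)$ is the unit normal of the plane of the foliation, parametrized by arc length on $\s^2$, and $\{\t,\n,\b\}$ is a moving orthonormal frame satisfying Frenet-type equations with curvature $\kappa$. We compute $\nu$, $H$ and $\mathrm{div}_\Sigma DF$ for $F=1/\nu_3-\nu_3$; equivalently, we substitute into the implicit equation above. After clearing the denominator (a power of $\nu_3$ times $|X_s\wedge X_\theta|$), $\Lambda=0$ becomes $\sum_k\big(A_k(s)\cos k\theta+B_k(s)\sin k\theta\big)=0$, with coefficients depending on $r,\c',\kappa$ and the $z$-components of $\t,\n,\b$. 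Following the argument of Enneper and Nitsche, we start from the highest-order coefficients. We aim to show that they force $\kappa\equiv0$, i.e.\ $\t$ is constant. The alternatives are that the vertical components satisfy a relation incompatible with the lower coefficients, or that the circles degenerate. This step is the main obstacle: the coefficients are long, and the anisotropy couples the vertical direction to $\theta$. I would first try to isolate the top-degree coefficient, which should be a product of $\kappa$, a power of $r$, and a factor like $\langle\t,e_3\rangle$ or $\langle\b,e_3\rangle$, and then handle each vanishing factor separately.

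\textbf{Step 2: the common normal is vertical.} With the planes parallel, rotate about the $z$-axis so that the common normal is $\t=(0,\sin\varphi,\cos\varphi)$. Write $X(s,\theta)=\c(s)+r(s)(\cos\theta\,e_1+\sin\theta\,(0,\cos\varphi,-\sin\varphi))$, with $\c(s)$ parametrized so that $\langle \c,\t\rangle=s$. The identity $\Lambda=0$ is again a trigonometric polynomial in $\theta$, now with coefficients polynomial in $\sin\varphi$, $\cos\varphi$, $r$, $r'$ and the components of $\c'$. Assuming $\sin\varphi\neq0$, we show that the top coefficients force $r'=0$ together with further relations making the lower coefficients equal to a nonzero multiple of $r^{-1}$ times a power of $\sin\varphi$. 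This is a contradiction. Hence $\sin\varphi=0$ and the planes are parallel to the $xy$-plane, as claimed; Theorem \ref{T1} then describes the resulting surfaces.
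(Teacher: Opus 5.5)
\textbf{Verdict: genuine gap.} Your outline matches the paper: expand $\Lambda=0$ in Fourier modes of $\theta$, exclude non-parallel planes and then tilted parallel planes by killing the top modes. Your implicit harmonic-graph formulation is a legitimate, and lighter, substitute for \eqref{h2}. But neither step is actually computed, and the outcomes you predict are not the ones that occur.

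\textbf{Step 1.} The decisive fact is about the leading coefficients in the paper's normalization $e_3=e_{11}\t+e_{22}\n+e_{33}\b$. For $\Lambda=0$ they satisfy $A_9+iB_9=-\frac{1}{128}r^5\kappa^5(e_{22}+ie_{33})^4$. Their vanishing gives $\langle\n,e_3\rangle=\langle\b,e_3\rangle=0$, i.e.\ $\t=\pm e_3$ is constant. This contradicts non-parallelism at once, with no lower coefficients and no case analysis. The factor you anticipate ($\langle\t,e_3\rangle$ or $\langle\b,e_3\rangle$ alone) is not the relevant one: $\langle\t,e_3\rangle=0$ only says the planes are vertical and gives no contradiction. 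Also, with $\t$ parametrized by arc length on $\s^2$ you have $|\t'|\equiv1$, so the conclusion must read ``$\t$ is constant'', not ``$\kappa\equiv0$''.

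\textbf{Step 2.} The mechanism you assert is false. The coefficients involve $r''$ and $\c''$, which your list omits, and the top mode is exactly the $\c''$-term. Take $G(q)=|q-\c(\langle q,\t\rangle)|^2-r(\langle q,\t\rangle)^2$ in your normalization, and set $f=(0,\cos\varphi,-\sin\varphi)$. Your implicit expression is then a trigonometric polynomial of degree $3$. Its $3\theta$-coefficients are $2r^3\sin^2\varphi\,\langle\c'',e_1\rangle$ and $2r^3\sin^2\varphi\,\langle\c'',f\rangle$. They force $\c''=0$ and say nothing about $r'$; the paper's $A_5,B_5$ contain no $r'$ either. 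Nor does a nonzero term $\propto r^{-1}\sin^k\varphi$ survive at the end. When all modes vanish one gets $r'=0$ and $\c'\parallel e_3$, a vertical elliptic cylinder, and the contradiction is $\nu_3\equiv0$.

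A clean way to carry this out uses an invariance your remark on symmetries misses, namely $u\mapsto u+\alpha x+\beta y$. For $\cos\varphi\neq0$, $v=u+y\tan\varphi$ is harmonic and constant on the projections of the circles. In the coordinates $(x,Y)=(x,y/\cos\varphi)$ these projections are circles $(x-a)^2+(Y-b)^2=r^2$, and they are level sets of a solution $\tilde v$ of $\tilde v_{xx}+k\,\tilde v_{YY}=0$ with $k=\sec^2\varphi$. Put $x-a=r\cos\theta$, $Y-b=r\sin\theta$ and $p=r'+a'\cos\theta+b'\sin\theta$. The condition becomes
\[
(1+k)p^2-2p\,(a'\cos\theta+kb'\sin\theta)+(\cos^2\theta+k\sin^2\theta)\bigl(a'^2+b'^2-r'^2-rr''-r(a''\cos\theta+b''\sin\theta)\bigr)=0 .
\]
For $k>1$:
\begin{itemize}
\item the $3\theta$-modes give $a''=b''=0$;
\item the $2\theta$-mode then gives $r'^2+rr''=0$;
\item the mean value then reads $(1+k)r'^2+ka'^2+b'^2=0$.
\end{itemize}
Hence $p\equiv0$, which is the degenerate cylinder. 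For $k=1$ the identity is exactly \eqref{rie}. Vertical planes are handled by the same computation, where the mean value becomes $1+\langle\c',e_1\rangle^2+r'^2>0$. Some computation of this kind is what your Step 2 is missing, and the outcome you state in its place is not correct.
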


As an immediate consequence of Thms. \ref{T1} and \ref{T2}, we have the complete classification of the  cyclic anisotropic minimal surfaces of the Dirichlet energy. 

\begin{corollary} \label{cor0}
If $\Sigma$ be a   cyclic anisotropic minimal   surface of the Dirichlet energy \eqref{fu}, then $\Sigma$ is a surface of revolution about an axis parallel to the $z$-axis or it is a member of the family of surfaces that appear in (2) of Thm. \ref{T1}.
\end{corollary}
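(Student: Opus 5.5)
The corollary follows directly from Theorems \ref{T1} and \ref{T2}; the plan is to chain them. Let $\Sigma$ be a cyclic anisotropic minimal surface of the Dirichlet energy \eqref{fu}, so $\Lambda=0$.

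First, I apply Theorem \ref{T2}. It says that the planes of the circle foliation of $\Sigma$ are all parallel to the $xy$-plane. Hence $\Sigma$ is foliated by circles lying in horizontal planes $z=s$. The circles cannot lie in vertical planes, and $\nu_3\neq 0$ holds on $\Sigma$ because the energy is only defined there. So we may take the height $s$ as the foliation parameter and write the surface as
$$X(s,\theta)=(a(s),b(s),s)+r(s)(\cos\theta,\sin\theta,0), \qquad s\in I.$$
This places $\Sigma$ exactly under the hypotheses of Theorem \ref{T1}.

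Second, I apply item (2) of Theorem \ref{T1}, since $\Lambda=0$. It gives two possibilities. Either $\Sigma$ is a surface of revolution about an axis parallel to the $z$-axis, namely a horizontal plane or a piece of $z=c\log\sqrt{x^2+y^2}$ up to translation. Or $\Sigma$ is, up to translation, one of the Type I, II or III surfaces given by \eqref{sol1}, \eqref{sol2}, \eqref{sol3}. This is precisely the dichotomy stated in the corollary.

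The only point needing care is the passage between the two theorems. Theorem \ref{T2} is a statement about the foliating planes, while Theorem \ref{T1} assumes a parametrization by horizontal circles with centers $(a(s),b(s),s)$. I have to check that a smooth one-parameter family of circles in parallel horizontal planes can be reparametrized by height. This holds locally wherever the heights of the planes vary monotonically, which is the nondegenerate situation of a foliation. A surface made of circles all lying in a single plane is itself a piece of a horizontal plane, which is already the rotational case. I expect this bookkeeping to be routine, so the corollary carries no genuine obstacle beyond Theorems \ref{T1} and \ref{T2} themselves.
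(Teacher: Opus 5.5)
Your proposal is correct and matches the paper, which obtains the corollary as an immediate consequence of chaining Theorem~\ref{T2} (the foliating planes are horizontal) with item (2) of Theorem~\ref{T1}. Your extra remark about reparametrizing the circle family by height is a point the paper takes for granted when it writes the planes as $z=s$ at the start of Section~\ref{s3}.
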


When the anisotropic mean curvature $\Lambda$ is a non-zero constant, we also obtain results of classification. However, as we will see  along the development of the proofs, we will need   to calculate the anisotropic mean curvature $\Lambda$   for given parametrizations of the surfaces. This involves long and tedious computations. We have used  a  symbolic software as  Mathematica to handle these cumbersome expressions  \cite{wo}.  In particular, when $\Lambda\not=0$, all these computations are more difficult than in the case $\Lambda=0$. We summarize the results when $\Lambda\not=0$.

\begin{theorem}\label{T3}
Let $\Sigma$ be a  cyclic CAMC surface   of the Dirichlet energy \eqref{fu}. If $\Lambda\not=0$, then  the planes of the foliation must be parallel.
In addition, if these planes   are parallel to the $xy$-plane, then $\Sigma$ is  a surface of revolution.
\end{theorem}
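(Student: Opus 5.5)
The proof of Theorem \ref{T3} splits into two independent parts: the parallelism of the planes of the foliation, and the rotational symmetry when these planes are horizontal. The second part is exactly item (1) of Thm. \ref{T1}, so we may invoke it directly. All the work is therefore in showing that, when $\Lambda\neq0$, the planes of the foliation cannot rotate.

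For the first part we argue by contradiction. Assume the planes are not parallel; then on an open subset we can parametrize $\Sigma$ by a moving frame. Let $\gamma(s)$ be a curve and $\{\t,\n,\b\}$ its Frenet frame, with the planes of the foliation orthogonal to $\t(s)$. The parametrization is
$$X(s,\theta)=\c(s)+r(s)\bigl(\cos\theta\,\n(s)+\sin\theta\,\b(s)\bigr).$$
Here the curvature $\kappa$ of $\gamma$ does not vanish identically, since the planes are not parallel, and $\c(s)=\gamma(s)+$ (components in $\n,\b$) is the curve of centers. We may take $s$ as arclength of $\gamma$. Writing $\c'=\alpha\t+\beta\n+\gamma_0\b$ and using the Frenet equations, we compute $X_s$, $X_\theta$, the normal $\nu$ and the coefficients of the second fundamental form. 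The Dirichlet anisotropic mean curvature can be written for a general parametrization. Indeed, $F=1/\nu_3-\nu_3$, and $\Lambda$ is the Euler--Lagrange expression of $\int F(\nu)\,d\Sigma$, so the computation reduces to rational expressions in $\cos\theta,\sin\theta$ once we clear the denominator $\nu_3^{\,k}|X_s\times X_\theta|^{m}$, with $\nu_3$ given by the third component of $X_s\times X_\theta$. The equation $\Lambda=\text{const}\neq0$ thus becomes
$$\sum_{n}\bigl(A_n(s)\cos(n\theta)+B_n(s)\sin(n\theta)\bigr)=0,$$
whose coefficients depend on $r,\kappa,\tau,\alpha,\beta,\gamma_0$, the components of the frame vectors along the $z$-direction $e_3$, and $\Lambda$. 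Each coefficient must vanish.

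The strategy is to read the highest-order coefficients first, working in the symbolic software as the paper does. We expect the leading coefficient to be a power of $\Lambda$ times a power of $r$ times a product involving $\langle\n,e_3\rangle,\langle\b,e_3\rangle$ and $\kappa$. Its vanishing then forces either $\kappa=0$, contradicting the assumption, or the planes to contain the vertical direction in a degenerate way, which we treat as a separate subcase. Because $\Lambda\neq0$ contributes a term $\Lambda\,|X_s\times X_\theta|\,\nu_3^{\,k}$ of higher trigonometric degree than the rest, the top coefficient should isolate $\Lambda$, and this is what breaks the Riemann-type freedom present when $\Lambda=0$. Once the top coefficient gives a relation, we substitute it and descend degree by degree until we obtain $\kappa\equiv0$. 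That means the planes are parallel, contradicting our assumption.

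The main obstacle is the size of the trigonometric polynomial. The factor $\nu_3$ involves the tilt of the foliation planes relative to $e_3$, so the degree in $\theta$ is high and the coefficients are long. The choice of normalization is crucial. We first use a rotation about the $z$-axis, which preserves $F$, and translations to simplify $\gamma$. We then use the non-parallel hypothesis locally to fix the frame component $\langle\t,e_3\rangle$ in terms of a single angle function. Handling the degenerate subcases, for example planes that are all vertical or $\t$ orthogonal to $e_3$, separately and showing each leads to $\Lambda=0$ or $\kappa=0$ is where we expect most of the care to be needed.
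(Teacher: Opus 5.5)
Your plan is the paper's proof. It uses the same parametrization $X=\c+r(\cos\theta\,\n+\sin\theta\,\b)$ along a Frenet frame, expands \eqref{h3} as a trigonometric polynomial in $\theta$, reads the contradiction off the top coefficients (only the $\Lambda$-term reaches degree $10$, as you describe), and quotes the horizontal case from Thm.~\ref{T1}(1).

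The ending is shorter than you expect, and your guessed "vertical planes" degenerate case is wrong: with $e_{22}=\langle\n,e_3\rangle$ and $e_{33}=\langle\b,e_3\rangle$, the coefficients $A_{10},B_{10}$ are $\Lambda$ times powers of $r$ and $\kappa$ times the real and imaginary parts of $(e_{22}+ie_{33})^5$. They therefore force $e_{22}=e_{33}=0$, i.e.\ $\t=\pm e_3$ is constant (the planes are horizontal, not containing the vertical), so $\kappa=0$ immediately, with no degenerate subcases and no descent in degree.
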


Notice that the second part of this theorem coincides with the statement (1) of Thm. \ref{T1}. For a full classification when $\Lambda\not=0$, it would only remain to prove that when  the planes of the foliation are parallel, then they are parallel to the $xy$-plane.    

 We point out that CAMC surfaces of revolution   have been also studied under the natural hypothesis that the surface energy $\mathcal{F}$ is axially symmetric, that is, it is invariant by rotations about an axis $L$ of $\r^3$ \cite{kp1,kp2,kp4}.  If   $L$ is the $z$-axis, then we demand that  the anisotropic surface energy  is of type
\begin{equation}\label{fr}
\mathcal{F}(X)=\int_\Sigma F(\nu_3)\, d\Sigma.
\end{equation}
The Dirichlet energy  \eqref{fu} is thus axially symmetric with respect to the $z$-axis. However, notice that in principle there is not an {\it a priori} relation between  the axis of $\mathcal{F}$ and the rotation axis of a  CAMC surface of revolution. For the Dirichlet energy, we prove that both axes must be parallel.

\begin{theorem}\label{ROT1}
 Let $\Sigma$ be a CAMC surface of the   Dirichlet energy  \eqref{fu}. If $\Sigma$ is a surface of revolution, then its rotation axis is parallel to the $z$-axis.
\end{theorem}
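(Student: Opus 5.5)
The plan is to reduce the CAMC condition for \eqref{fu} to an explicit trigonometric polynomial in the rotation angle, and then to show that its leading Fourier coefficient cannot vanish unless the axis is vertical.

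Since $\mathcal{F}$ is invariant under rotations about the $z$-axis and under translations, I may normalize the position of the axis $L$. After a translation, $L$ passes through the origin. After a rotation about the $z$-axis, $L$ has direction $v=(\sin\varphi,0,\cos\varphi)$ with $\varphi\in[0,\pi/2]$. The goal is then to prove $\sin\varphi=0$. I complete $v$ to an orthonormal frame with $e_1=(\cos\varphi,0,-\sin\varphi)$ and $e_2=(0,1,0)$.

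Away from the degenerate pieces treated below, the profile curve is locally a graph over the axis, so I parametrize
$$X(s,\theta)=s\,v+r(s)\big(\cos\theta\, e_1+\sin\theta\, e_2\big),\qquad r>0 .$$
The degenerate pieces are those where the profile is orthogonal to the axis (pieces of planes orthogonal to $L$) or parallel to it (cylinders). For them I either use the parametrization with the roles of $s$ and $r$ exchanged, or treat them directly at the end.

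I then compute $\nu$ and $\nu_3$. Up to sign,
$$\nu_3=\frac{\cos\varphi\,r'+\sin\varphi\cos\theta}{\sqrt{1+r'^2}} .$$
Rather than computing $2HF-\mbox{div}_\Sigma DF$ intrinsically, I use the fact recalled in the introduction. Where $\nu_3\neq0$, $\Sigma$ is locally a graph $z=u(x,y)$, and CAMC is equivalent (up to the normalizing factor) to $u_{xx}+u_{yy}=\Lambda/2$. Equivalently, I write $\Lambda$ in the $(s,\theta)$ chart with the help of symbolic computation, as the paper does elsewhere.

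Either way, after clearing the denominators, which are powers of $\sqrt{1+r'^2}$, of $r$ and of $\cos\varphi\, r'+\sin\varphi\cos\theta$, the identity $\Lambda=\mathrm{const}$ becomes
$$\sum_{k=0}^{N}A_k(s)\cos(k\theta)=0,$$
where each $A_k$ is a polynomial in $r,r',r'',\Lambda,\sin\varphi,\cos\varphi$. All these coefficients must vanish identically in $s$.

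The key step is to isolate the coefficient of highest order $N$. I expect it to have the form $\sin^m\varphi\cdot P(r,r',r'')$, with $P$ something like $r\,r''-(1+r'^2)$, or a term containing the factor $\Lambda r^2$ when $\Lambda\neq0$.

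Assume $\sin\varphi\neq0$. Then $P=0$ gives an ODE for $r$, whose solutions are explicit (catenary-type or linear). I substitute them into the next coefficient $A_{N-1}$ and show that it is a nonzero function, which gives a contradiction. If $\varphi=\pi/2$ (horizontal axis) the structure simplifies, since $\cos\varphi=0$, and I treat it as a separate subcase.

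For the degenerate pieces, a cylinder around a non-vertical axis is a graph of $u$ with $\Delta u$ computable explicitly. Its Laplacian is not constant, for instance because $u$ behaves like $\sqrt{R^2-(\ldots)^2}$ near its edges. A plane orthogonal to $L$ is the graph of a linear function and is anisotropic minimal. It is the trivial case in which every line orthogonal to it is an axis of revolution, and one must exclude it or interpret it as rotational about a vertical axis only when horizontal. I will state this explicitly.

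The main obstacle is the middle step: making sure that the top coefficient really factors with a power of $\sin\varphi$ times a nondegenerate expression, and that the remaining coefficients are incompatible with its solutions. The computation is heavy, and I would rely on Mathematica, organizing it by powers of $\cos\theta$ after multiplying through by $(\cos\varphi\, r'+\sin\varphi\cos\theta)^3$.
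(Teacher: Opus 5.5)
This is essentially the paper's argument: in the axis-adapted frame the paper writes $e_3=(e_{11},0,e_{33})$, so your $e_{11}=-\sin\varphi$ and $e_{33}=\cos\varphi$. There the top coefficient is $A_5=e_{11}^5\Lambda r/16$, which settles $\Lambda\neq0$ immediately with no ODE. For $\Lambda=0$ the paper relies on the case analysis in the proof of Theorem~\ref{T2} with $a=b=0$, where the leading coefficient is $-\tfrac14 e_{11}^4(rr''+r'^2)$. So $r^2$ is affine in $s$, not a catenary-type profile as you guessed, and the resulting cylinders and $r=c\sqrt{s}$ profiles are excluded by lower coefficients, with the horizontal axis treated separately just as you planned.

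Your caveat about planes is a genuine correction the paper misses. A non-horizontal plane is anisotropic minimal and rotationally symmetric about each of its (non-vertical) normal lines; the paper's parametrization by height along the axis silently excludes this case, so the statement should be read for non-planar $\Sigma$.
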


The proof of   Thm.   \ref{T1}  will be done in Sect. \ref{s3}, after a Preliminary section \ref{s2}. In the final part of Sect. \ref{s3}, we describe  properties of the surfaces parametrized by \eqref{sol1}-\eqref{sol2}-\eqref{sol3}, some of them are shared by the Riemann minimal examples. Theorem \ref{T2} is proved in Sect. \ref{s4} and it consists of two steps. First, we will prove that the planes of the foliation must be parallel. In this step, we will also assume the case that $\Lambda$ is an arbitrary constant and not necessarily $0$. In particular, this proves the first statement of Thm. \ref{T3}. In the second step, we prove that these planes must be parallel to the $xy$-plane.  Theorem \ref{ROT1} is proved in Sect. \ref{s5} where previous computations of Sect. \ref{s3} will provide a short proof of the theorem.

It would be desirable to extend all these results to other   anisotropic surface energies.   For example, Reilly also considered in \cite{re} the energy 
\begin{equation*}
\mathcal{F}(\nu_3)=\int_\Sigma \dfrac{\sqrt{2\nu_3^2-1}}{\nu_3}\, d\Sigma,
\end{equation*}
whose Wulff shape is the hyperboloid $x^2+y^2-z^2=-1$. In non-parametric way, this functional is $\int_\Omega\sqrt{1-|Du|^2}$ which represents the area element of a spacelike surface in Lorentz-Minkowski space. Thus stationary surfaces  for volume-preserving variations are spacelike surfaces   with constant mean curvature. In this situation, the classification of cyclic CMC spacelike surfaces is known: \cite{lls,lo3,lo4}.

\section{Preliminaries}\label{s2}

In this section, we review some technicalities about the computation of the anisotropic mean curvature of a surface focusing in the Dirichlet energy. Let $\langle,\rangle$ denote both the Euclidean metric of $\r^3$ and   the induced one on a surface $\Sigma$ immersed in $\r^3$.   Let $X\colon\Sigma\to\r^3$ be an immersion of an oriented surface $\Sigma$.   For an axially symmetric   functional     \eqref{fr}, the anisotropic mean curvature $\Lambda$ is given by 
\begin{equation}\label{h}
\Lambda=\frac{h(v_1,v_1)}{\mu_1}+\frac{h(v_2,v_2)}{\mu_2},
\end{equation}
where $h$ is the second fundamental form of $\Sigma$ with respect to an orthonormal frame $\{v_1,v_2\}$ of $\Sigma$ and $\mu_1$ and $\mu_2$ are the principal curvatures of the Wulff shape \cite{kp1}. The principal directions $\{E_1,E_2\}$ are
$$E_1=e_3-\nu_3\nu,\quad E_2=\nu\times E_1,$$
where $e_3=(0,0,1)$ and  
$$\frac{1}{\mu_1}=(1-\nu_3^2)F''(\nu_3)+\frac{1}{\mu_2},\quad \frac{1}{\mu_2}=F-\nu_3F'(\nu_3).$$
For the Dirichlet energy \eqref{fu}, the principal curvatures $\mu_1$ and $\mu_2$  are
$$\frac{1}{\mu_1}=\frac{2}{\nu_3^3},\quad \frac{1}{\mu_2}=\frac{2}{\nu_3}.$$
Here it is understood that $\nu_3^2\not=1$ in order to have $E_1\not=0$. If $\nu_3^2\equiv 1$, then $\Sigma$ is a horizontal plane. In general, any plane is a stationary surface for $\Lambda=0$ because the second fundamental form $h$ is $0$ identically.

Notice that  $\{E_1,E_2\}$ is an orthogonal basis but no unitary. Thus the anisotropic mean curvature $\Lambda$ in \eqref{h} writes as 
$$\Lambda\nu_3^3=\frac{2h(E_1,E_1)}{|E_1|^2}+\frac{2h(E_2,E_2)\nu_3^2}{|E_2|^2}.$$
  Since $|E_1|^2=|E_2|^2=1-\nu_3^2$, this equation is equivalent to 
\begin{equation}\label{h2}
\Lambda\nu_3^3(1-\nu_3^2) =2(h(E_1,E_1)+ h(E_2,E_2)\nu_3^2) . 
\end{equation}
For the computation of $h(E_i,E_i)$ we use a parametrization $X=X(s,\theta)$ of $\Sigma$. Notice that the tangent plane is spanned by $\{X_s,X_\theta\}$. Here the subindices indicate the corresponding derivatives with respect to $s$ and $\theta$, respectively. Let  write $\{E_1,E_2\}$ in coordinates with respect to $\{X_s,X_\theta\}$,
\begin{equation}\label{e1e2}
\begin{split}
E_1&=c_{11} X_s+c_{12}X_\theta,\\
E_2&=c_{21}X_s+c_{22}X_\theta.
\end{split}
\end{equation}
Then
$$h(E_1,E_1)=c_{11}^2h(X_s,X_s)+2 c_{11}c_{12}h(X_s,X_\theta)+c_{12}^2h(X_\theta,X_\theta),$$
$$h(E_2,E_2)=c_{21}^2h(X_s,X_s)+2 c_{21}c_{22}h(X_s,X_\theta)+c_{22}^2h(X_\theta,X_\theta).$$
Let $g_{ij}$ be the coefficients of the first fundamental form of $X$ and $\nu=X_s\times X_\theta/|X_s\times X_\theta|$. Then 
\begin{equation*}
\begin{split}
h(X_s,X_s)&=\langle\nu,X_{ss}\rangle=\frac{\langle X_s\times X_\theta,X_{ss}\rangle}{\sqrt{\mbox{det}(g_{ij})}}:=\frac{h_{ss}}{\sqrt{\mbox{det}(g_{ij})}},\\
h(X_s,X_\theta)&=\langle\nu,X_{s\theta}\rangle=\frac{\langle X_s\times X_\theta,X_{s\theta}\rangle}{\sqrt{\mbox{det}(g_{ij})}}:=\frac{h_{s\theta}}{\sqrt{\mbox{det}(g_{ij})}},\\
h(X_\theta,X_\theta)&=\langle\nu,X_{\theta\theta}\rangle=\frac{\langle X_s\times X_\theta,X_{\theta\theta}\rangle}{\sqrt{\mbox{det}(g_{ij})}}:=\frac{h_{\theta\theta}}{\sqrt{\mbox{det}(g_{ij})}}.
\end{split}
\end{equation*}
Then Eq. \eqref{h2} writes as
\begin{equation}\label{h3}
\Lambda\nu_3^3(1-\nu_3^2) \sqrt{\mbox{det}(g_{ij})}=
2 (c_{11}^2hss+2c_{11}\,c_{12}hs\theta +c_{12}^2h\theta\theta)+2\nu_3^2 ( c_{21}^2hss+2c_{21}\,c_{22}hs\theta +c_{22}^2h\theta\theta).
\end{equation}

We now particularize the case that the surface is a surface of revolution about the $z$-axis. Since   $\nu_3\not=0$, the surface is locally a graph on the $xy$-plane at every point. Thus   the generating curve writes as  $\gamma(r)= (r,0,u(r))$,   $r\in I\subset\r^+$. The parametrization of the surface is  
$X(r,\theta)=(r\cos\theta,r\sin\theta,u(r))$, $r\in I$,   $\theta\in\r$. Now we can do a change of variables in the Euler-Lagrange equation $u_{xx}+u_{yy}=\Lambda/2$, or   computing directly \eqref{h3}. In this case, if $W=1+u'^2$, we have
$$\mbox{det}(g_{ij})=r\sqrt{W},\ \nu_3=\frac{1}{\sqrt{W}},\ h(E_1,E_1)=\frac{ru'u''}{W^2},\  h(E_2,E_2)=\frac{u'^3}{W}.$$
Then \eqref{h3} is  
$$u''+\frac{u'}{r}=\frac{\Lambda}{2}.$$
The solution of this equation is 
\begin{equation}\label{log}
u(r)=c_1\log(r)+\frac{\Lambda}{8}r^2+c_2,\quad c_1,c_2\in\r.
\end{equation}
Consequently, if $\Lambda=0$, $\Sigma$ is a horizontal plane ($c_1=0$) or $z=c_1\log(\sqrt{x^2+y^2})$. The Wulff shape appears by taking $c_1=0$ and $\Lambda>0$. 
\section{Proof of Theorem \ref{T1} }\label{s3}

Suppose that $\Sigma$ is a surface foliated by circles contained in planes parallel to the $xy$-plane. If all these planes coincide, then $\Sigma$ is a horizontal plane which is trivially an anisotropic minimal surface. From now, we discard this   situation.  Since  the planes of the foliation are   parametrized by $z=s$, $s\in\r$,   we can assume  that the curve formed by all centers of these circles is $s\mapsto (a(s),b(s),s)$, $s\in I\subset\r$ and $a,b\colon I\to\r$ are smooth functions. If $r(s)>0$ is the radius of each circle contained in  the plane $z=s$, then a parametrization of $\Sigma $ is  
\begin{equation}\label{para}
X(s,\theta)=(a(s),b(s),s)+r(s)(\cos\theta,\sin\theta,0),
\end{equation}
where $\theta\in \r$. Notice that $\Sigma$ is a surface of revolution about an axis parallel to the $z$-axis if and only if both functions $a=a(s)$ and $b=b(s)$ are constant. This in turn is equivalent to $a'(s)=b'(s)=0$ identically in $I$. First computations are \begin{equation*}
\begin{split}
X_s&=(a',b',1)+r'(\cos\theta,\sin\theta,0),\\
X_\theta&=r(-\sin\theta,\cos\theta,0),\\
 \mbox{det}(g_{ij})&=r^2(1+\left(r'+a'\cos \theta +b'\sin \theta \right)^2).
 \end{split}
 \end{equation*}
Let
\begin{equation}\label{w}
W=  1+\left(r'+a'\cos \theta +b'\sin \theta \right)^2.
\end{equation}
The Gauss map of $\Sigma$ is 
\begin{equation}\label{nn}
\nu=\frac{1}{\sqrt{W}}(-\cos \theta,-\sin \theta,r'+a'\cos \theta +b'\sin \theta ).
\end{equation}
In particular, 
\begin{equation}\label{nu3}
\nu_3=\frac{1}{\sqrt{W}}(r'+a'\cos \theta +b'\sin \theta).
\end{equation}
The basis $\{E_1,E_2\}$ is  
\begin{equation*}
\begin{split}
E_1&=\frac{1}{W}( \cos \theta \left(r'+a'\cos \theta +b'\sin \theta \right) , \sin \theta \left(r'+a'\cos \theta +b'\sin \theta \right) ,1),\\
E_2&=\frac{1}{\sqrt{W}}(-\sin \theta,\cos \theta,0).
\end{split}
\end{equation*}
In terms of the basis $\{X_s,X_\theta\}$, we have 
$$\begin{array}{lll}
&c_{11}=\dfrac{1}{W},&c_{12}=\dfrac{a'\sin\theta-b'\cos\theta}{rW},\\
&c_{21}=0,&c_{22}=\dfrac{1}{r\sqrt{W}}.
\end{array}$$
For the computation of $h(E_1,E_1)$ and $h(E_2,E_2)$, we find
\begin{equation*}
\begin{split}
h_{ss}&= -r(r''+a''\cos \theta +b''\sin \theta),\\
h_{s\theta}&=0,\\
h_{\theta\theta}&=r^2.
\end{split}
\end{equation*}
 Notice that $1-\nu_3^2=1/W$. We have 
\begin{equation*}
\begin{split}
h(E_1,E_1)&=\frac{\left(a'\sin \theta -b'\cos \theta \right)^2-r \left(r''+a''\cos \theta +b''\sin \theta\right)}{W^2},\\
h(E_2,E_2)&=\frac{1}{W}.
\end{split}
\end{equation*}
Using \eqref{nu3}, equation \eqref{h3} becomes
\begin{equation*}
 \Lambda r(r'+a'\cos \theta +b'\sin \theta)^3    =2(r'+a'\cos \theta +b'\sin \theta)^2  +2 \left((a'\sin\theta-b'\cos\theta)^2-r  (r''+a''\cos\theta +b''\sin\theta   )\right).
\end{equation*}
We write this identity as   an equation of type 
$$\sum_{n=0}^3 A_n(s)\cos(n\theta)+B_n(s)\sin(n\theta) =0.$$
Since the functions $\{\cos(n\theta),\sin(n\theta):0\leq n\leq 3\}$ are linearly independent, all coefficients $A_n$ and $B_n$ must be $0$ identically. For $n=3$ we have 
\begin{equation*}
\begin{split}
A_3&=\frac{1}{4} \Lambda   ra' (a'^2-3b'^2),\\
B_3&=\frac{1}{4} \Lambda ^2 r    b'  (3 a'^2-  b'^2).
\end{split}
\end{equation*}
\begin{enumerate}
\item Case $\Lambda\not=0$. Since $r\not=0$, both equations  $A_3=0$ and $B_3=0$  imply  $a'=b'=0$  identically. This proves that the surface is rotational, obtaining (1) of Thm. \ref{T1}.

\item Case $\Lambda=0$. Now Eq. \eqref{h3} is of degree $1$, namely,  
\begin{equation}\label{k}
 (2ra''-4a'r')\cos\theta+(2r b''-4b'r')\sin\theta-2(a'^2+b'^2+r'^2-rr'')=0.
\end{equation}
This gives    
\begin{equation}\label{rie}
\begin{split}
ra''-2a'r'&=0,\\
rb''-2b'r'&=0,\\
a'^2+b'^2+r'^2-rr''&=0.
\end{split}
\end{equation}
If $a'=b'=0$ simultaneously, then the surface is rotational and the equation for $r$ is $r'^2-rr''=0$. Its solutions is $r(s)=r_0>0$ (horizontal plane) or  $r(s)=c_2 e^{c_1s}$, $c_2>0$.  Then $z=s=\frac{1}{c_1}\log(r)$ and this gives the first part of (2) in Thm. \ref{T1}: this coincides with the expression given in  \eqref{log}.  If $a'$ or $b'$ is not $0$ identically, an integration of the   first two equations implies that there exist $\lambda,\mu\in\r$ such that 
$a'=\lambda r^2$ and $b'=\mu r^2$ and $\lambda^2+\mu^2\not=0$. In case that $\lambda$ or $\mu$ is $0$, then $a$ or $b$ is a constant function. This does not affect in the rest of computations after the corresponding changes. Substituting into the third equation of \eqref{rie} we obtain 
\begin{equation}\label{rr}
(\lambda^2+\mu^2)r^4+r'^2-rr''=0.
\end{equation}
We solve \eqref{rr}. For this, let $u=r'$ and $u=u(r)$. Then $r''(s)=uu'$ and \eqref{rr} writes 
$$ruu'=(\lambda^2+\mu^2)r^4+u^2.$$
Hence we solve $u$ obtaining 
$$r'=\pm r\sqrt{(\lambda^2+\mu^2)r^2+c_1},$$
where $c_1\in\r$ is an integration constant. Then  
$$\int\frac{dr}{r\sqrt{(\lambda^2+\mu^2)r^2+c_1}}=\pm s+c_2,\quad c\in\r.$$
The solution of this equation depends on the sign of $c_1$. Without loss of generality, we can assume $c_2=0$ because this only gives a translation in the domain $I$ of the functions $a$, $b$ and $r$ and also, a vertical translation of the surface because of the third coordinate in the parametrization \eqref{para}.
\begin{enumerate}
\item Case $c_1<0$. Then 
$$r(s)=\frac{c}{\sqrt{\lambda^2+\mu^2}\cos(cs)},$$
for some $c\in\r$. From this value of $r$, we obtain $a$ and $b$ obtaining \eqref{sol1}.

\item Case $c_1=0$. Then 
$$r(s)=\frac{1}{\sqrt{\lambda^2+\mu^2}s+c},$$
for some $c\in\r$. Hence, we have  $a$ and $b$ as in \eqref{sol2}.
\item Case $c_1>0$. Then 
$$r(s)=\frac{c}{\sqrt{\lambda^2+\mu^2}\sinh(cs)},$$
for some $c\in\r$. From this value of $r$, we obtain $a$ and $b$ obtaining \eqref{sol3}.
\end{enumerate}

\end{enumerate}
Let $\c(s)=(a(s),b(s),s)$ be the curve of centers of the circles of the foliation.  Since $\c'(s)=(\lambda r^2,\mu r^2,1)$, then $\c'(s)$ is orthogonal to the horizontal constant vector $(-\mu,\lambda,0)$. This proves that $\c(s)$ lies contained in the vertical plane of equation $-\mu x+\lambda y=0$. This plane is a plane of symmetry of the surface.   The surface  is embedded because the curve of centers $\c(s)$ is a graph on the $z$-axis.
This finishes the proof of Thm. \ref{T1}.

\begin{remark}Comparing the above computations with the isotropic case, $F\equiv 1$, the first two equations of \eqref{rie} coincide. The difference is the third equation of \eqref{rie} which, in the isotropic case is $ 1+a'^2+b'^2+r'^2-rr''=0$. For this equation, if $a'=b'=0$,  the solution is the plane $r(s)=r_0>0$ and the catenoid $r(s)=c_1\cosh(\frac{s+c_2}{c_1})$. If $a'^2+b'^2\not=0$, then the solutions for $a$, $b$ and $r$ are given in terms of elliptic integrals obtaining the corresponding parametrizations of  the Riemann  minimal examples. The non-presence of  `$1$' in the third equation of \eqref{rie} allows an explicit integration of the equation for the Dirichlet energy. 
\end{remark}
 
We finish this section obtaining geometric properties of the non-rotational surfaces of Thm. \ref{T1}. Without loss of generality, we assume $\lambda>0$.  After a rotation with respect to the $z$-axis, which does not affect to the value of $\Lambda$, and up to   translations in $\r^3$, we can suppose $\mu=0$ and $b(s)=0$.  The study of the surfaces will be done according to each one of the types of item (2): surfaces of types I, II and III   will be described in Thms. \ref{t5},  \ref{t6} and \ref{t7}, respectively.

\begin{theorem}[Type I]\label{t5} Let $\Sigma_{\lambda,c}$ be a surface parametrized by 
\begin{equation}\label{rie1}
X(s,\theta)=\left(\frac{c}{\lambda} \frac{ \sin(cs)+\cos\theta}{ \cos (cs)},  \frac{ c}{ \lambda \cos (cs)}\sin\theta,s\right),\quad s\in I:=(-\frac{\pi}{2c},\frac{\pi}{2c}),\theta\in\r.
\end{equation}
Here 
$$a(s)=\frac{c}{\lambda}\tan(cs),\quad r(s)=\frac{c}{\lambda\cos(cs)},\quad c>0.$$
\begin{enumerate}
\item The surface $\Sigma_{\lambda,c}$ lies contained in the horizontal slab $\{(x,y,z)\in\r^3:-\frac{\pi}{2c}< z< \frac{\pi}{2c}\}$. 
\item The surface $\Sigma_{\lambda,c}$ is symmetric about the plane $ y=0$ and also  under the $180^0$-rotation about the $y$-axis.
\item The orthogonal projections onto the $xy$-plane of any two circles of the foliation   overlap.
\item The intersection of $\Sigma_{\lambda,c}$ with the planes $z=\pm \frac{\pi}{2c}$ are the  straight-lines   of equation $\{x=0,z=\pm\frac{\pi}{2c}\}$.
\item The surface $\Sigma_{\lambda,c}$  can be extended by vertical translations $(x,y,z)\mapsto (x,y,\frac{(2k+1)\pi}{2c})$, $k\in\mathbb{Z}$,  obtaining a   stationary surface $\widetilde{\Sigma}_{\lambda,c}$. This surface  is foliated by circles and lines in parallel planes. The lines only occur at the horizontal planes   $z= \frac{(2k+1)k\pi}{2c}$  and the circles in the rest of horizontal planes.
\item The horizontal planes  $z=\pm   \frac{(2k+1) \pi}{2c}$  are ends of $\widetilde{\Sigma}_{\lambda,c}$ .

\end{enumerate}
\end{theorem}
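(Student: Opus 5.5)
The plan is to read off each item directly from the explicit parametrization \eqref{rie1}. The approach is to use the geometric meaning of the data: the center $(a(s),0,s)$ with $a(s)=\frac{c}{\lambda}\tan(cs)$, and the radius $r(s)=\frac{c}{\lambda\cos(cs)}$. Since $\mu=0$, the computations are one-dimensional, and all properties should follow from elementary trigonometric identities.

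\textbf{Items (1)--(3).} Item (1) is immediate, since the third coordinate is $z=s\in I=(-\frac{\pi}{2c},\frac{\pi}{2c})$.

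For item (2), I will check two symmetries.
\begin{itemize}
\item The map $\theta\mapsto-\theta$ sends $X(s,\theta)$ to its reflection in $y=0$. This is consistent with the symmetry plane found in the proof of Thm.~\ref{T1}.
\item The $180^\circ$-rotation about the $y$-axis is $(x,y,z)\mapsto(-x,y,-z)$. I will verify $X(-s,\pi-\theta)=(-x,y,-z)$ using that $a$ is odd, $r$ is even, $\cos(\pi-\theta)=-\cos\theta$ and $\sin(\pi-\theta)=\sin\theta$.
\end{itemize}

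For item (3), the projection of the circle at height $s$ is the disk with center $(a(s),0)$ and radius $r(s)$. I will show that any two such disks, at heights $s_1,s_2$, intersect by verifying $|a(s_1)-a(s_2)|<r(s_1)+r(s_2)$. After multiplying by $\frac{\lambda}{c}\cos(cs_1)\cos(cs_2)>0$, this is equivalent to $|\sin(c(s_1-s_2))|<\cos(cs_1)+\cos(cs_2)$. I expect this inequality to be the main (mild) obstacle. I would handle it by writing $u=cs_1$, $v=cs_2$ with $u,v\in(-\pi/2,\pi/2)$ and using
$$\cos u+\cos v=2\cos\tfrac{u+v}{2}\cos\tfrac{u-v}{2},\qquad \sin(u-v)=2\sin\tfrac{u-v}{2}\cos\tfrac{u-v}{2}.$$
This reduces the claim to $|\sin\frac{u-v}{2}|<\cos\frac{u+v}{2}$, which holds because $|\frac{u-v}{2}|+|\frac{u+v}{2}|=\max(|u|,|v|)<\pi/2$. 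Alternatively, one could show that the point $(0,0)$, or more generally a suitable line segment, lies in every projected disk. Indeed $r(s)-|a(s)|=\frac{c}{\lambda}\frac{1-|\sin cs|}{\cos cs}>0$, so the origin lies in every disk, and this gives item (3) at once.

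\textbf{Items (4)--(6).} For item (4), I will study the limit $s\to\pm\frac{\pi}{2c}$. Writing
$$x=\frac{c}{\lambda}\frac{\sin(cs)+\cos\theta}{\cos(cs)},$$
the numerator tends to $\pm1+\cos\theta$. To keep $x$ bounded I reparametrize near the end $\theta=\pi-\varphi$ (respectively $\theta=-\varphi$) with $\varphi$ proportional to $\cos(cs)$. A Taylor expansion then shows that $x\to0$ while $y$ covers all of $\mathbb R$. Conversely, every point with $x\ne0$ escapes to infinity. Hence the limit set of the closure in the planes $z=\pm\frac{\pi}{2c}$ is the line $\{x=0,z=\pm\frac{\pi}{2c}\}$. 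Equivalently, I can eliminate $\theta$ to get the implicit equation
$$\Bigl(\frac{\lambda}{c}x\cos(cz)-\sin(cz)\Bigr)^2+\Bigl(\frac{\lambda}{c}y\cos(cz)\Bigr)^2=1,$$
which simplifies, after dividing by $\cos(cz)$, to
$$\frac{\lambda^2}{c^2}(x^2+y^2)\cos(cz)-\frac{2\lambda}{c}x\sin(cz)-\cos(cz)=0.$$
Setting $\cos(cz)=0$ in this equation gives $x=0$. This implicit form is the cleanest route: it defines a smooth surface across $\cos(cz)=0$ wherever the gradient is nonzero, and the intersection with those planes is exactly the line.

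For item (5), the same implicit equation is invariant under $z\mapsto z+\pi/c$, since both $\cos$ and $\sin$ change sign. So the surface $\widetilde\Sigma_{\lambda,c}$ it defines is the union of vertical translates of $\Sigma_{\lambda,c}$ together with the lines. It is stationary because the anisotropic mean curvature is given by the local equation \eqref{h2}, which vanishes on the open dense part of $\widetilde\Sigma_{\lambda,c}$ and hence on all of it by continuity. The horizontal slices are circles when $\cos(cz)\neq0$ and lines otherwise.

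Finally, for item (6), near those planes the radii $r(s)$ blow up and the surface becomes asymptotically horizontal. I will check that $\nu_3\to1$ away from the line, which is what justifies calling these planes ends, in analogy with the Riemann examples.
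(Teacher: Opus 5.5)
Your proposal is correct. For items (1)--(3) it is the paper's argument: the paper's ``trivially true'' inequality $|\tan(cs_1)-\tan(cs_2)|<1/\cos(cs_1)+1/\cos(cs_2)$ is just your observation $|a(s)|<r(s)$, i.e.\ the origin lies in every projected disk.

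Where you genuinely differ is in (4)--(5).
\begin{itemize}
\item For (4), the paper proves only one inclusion. It exhibits $X(s,\pi-y/r(s))\to(0,y,\frac{\pi}{2c})$, which is the same reparametrization you sketch, but never shows that no other points of the planes are limit points.
\item For (5), the paper uses the Schwarz reflection principle for the harmonic height $z(x,y)$. This produces the $180^\circ$ rotation about $\{x=0,z=\frac{\pi}{2c}\}$, which matches the ``vertical translations'' of the statement only after composing with the symmetry in (2).
\end{itemize}
Your level set $G=0$ does more. It gives both inclusions in (4) at once: continuity of $G$ forces $x=0$ on the limit set, and $\partial_xG=\mp 2\lambda/c\neq 0$ there makes $\{G=0\}$ a smooth surface crossing the plane. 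It also presents $\widetilde\Sigma_{\lambda,c}$ as a single regular level set and gives the invariance under $z\mapsto z+\pi/c$ directly. In exchange, the paper's reflection argument is the one that carries over unchanged to Types II and III.

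Two small corrections.
\begin{itemize}
\item In (6), with the orientation \eqref{nn}, $\nu_3\to1$ at the top plane but $\nu_3\to-1$ at the bottom one, so you should state $|\nu_3|\to1$.
\item In (5), ``hence on all of it by continuity'' needs the proviso $\nu_3\neq0$. Your own equation shows that every projected circle passes through $(0,\pm c/\lambda)$. So $\Sigma_{\lambda,c}$ already contains the vertical segments $\{x=0,\ y=\pm c/\lambda\}$, where $\nu_3=0$ and the integrand $1/\nu_3-\nu_3$ is singular. Neither your continuity argument nor the paper's reflection (which needs a graph over the $xy$-plane) reaches those points. This limitation is inherited from the paper, not introduced by you.
\end{itemize}
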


\begin{proof}
The surface $X(s,\theta)$ is depicted in Fig. \ref{fig1}.
\begin{enumerate}  
\item It is a consequence that the domain of the $s$-variable is   $I=(-\frac{\pi}{2c},\frac{\pi}{2c})$. 
\item The first symmetry is immediate because $\c$ is contained in the plane $y=0$. On the other hand,   if $R\colon\r^3\to\r^3$ given by      $R(x,y,z)=(-x,y,-z)$ is the $180^0$ rotations about the $y$-axis, it holds $R\circ X(s,\theta)=X(-s,\pi-\theta)$ for all $s,\theta$, proving the result.
\item  Two circles at heights $s_1$ and $s_2$ overlap if and only if  $|a(s_1)-a(s_2)|<r(s_1)+r(s_2)$. From the expressions of $a(s)$ and $r(s)$, this inequality writes as
$$ |\tan(cs_1)-\tan(cs_2)|<\frac{1}{\cos (cs_1)}+\frac{1}{\cos (cs_2)},$$
which is trivially true.

\item Let $L_1$ be the straight-line of equation $\{x=0,z= \frac{\pi}{2c}\}$. We prove that any point of $L_1$ is limit of points of $\Sigma_{\lambda,c}$. Given an arbitrary point $(0,y,\frac{\pi}{2c})\in L_1$, we have 
$$\lim_{s\to\pi/(2c)}X(s,\pi-\frac{y}{r(s)})=\lim_{s\to\pi/(2c)}\left(a(s)-r(s)\cos(\frac{y}{r(s)}),r(s)\sin(\frac{y}{r(s)}),s\right)=(0,y,\frac{\pi}{2c}).$$
By the symmetry with respect to the $180^0$-rotation about the $y$-axis, we obtain that the line $\{x=0,z=-\frac{\pi}{2c}\}$ is also limit of points of $\Sigma_{\lambda,c}$. 
\item  Since the third coordinate $z=z(x,y)$ of $\Sigma_{\lambda,c}$ is harmonic, the   property is consequence of  the Schwarz reflection principle.
\item Each surface $\Sigma_{\lambda,c}$ is asymptotic to the family of   horizontal  planes $z=\pm \frac{\pi}{2c}$, $k\in\mathbb{Z}$, because fixing $\theta$, $\theta\not=\pi$, we have $\lim_{s\to\pm\pi/(2c)}X(s,\theta)=(\infty,\infty,\pm\frac{\pi}{2c})$. The result for  $\widetilde{\Sigma}_{\lambda,c}$ follows by vertical translations.
\end{enumerate}
\end{proof}

\begin{figure}[hbtp]
\centering
\includegraphics[width=.45\textwidth]{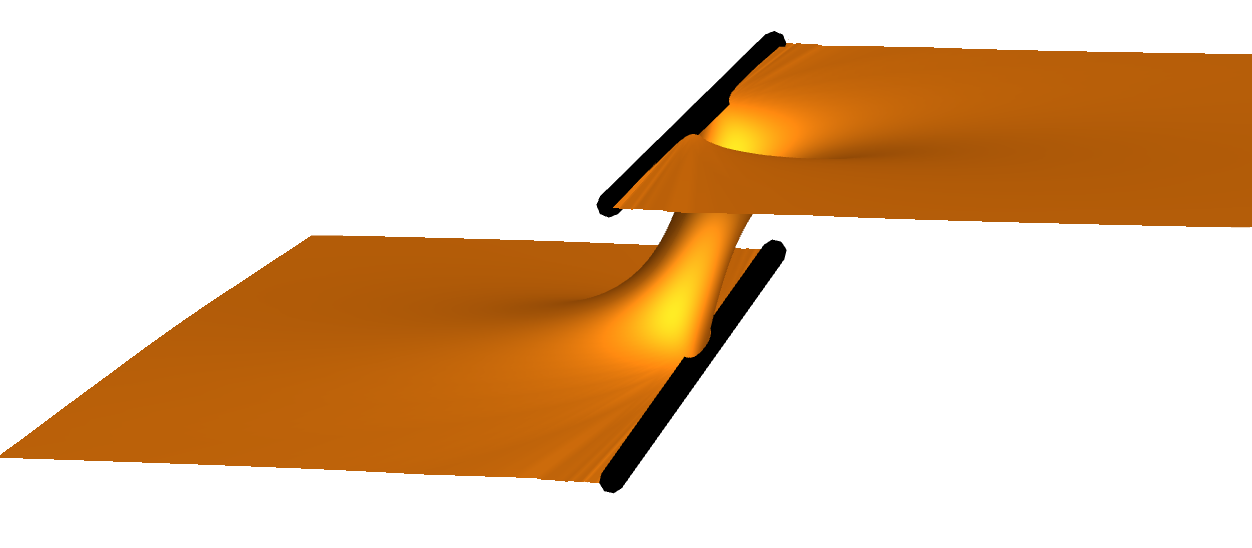}\qquad \includegraphics[width=.35\textwidth]{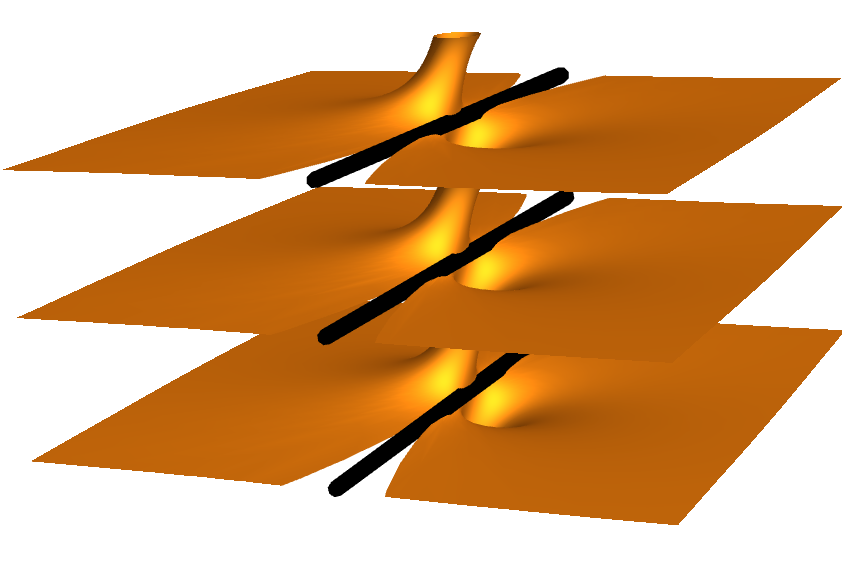}
\caption{Surfaces $\Sigma_{\lambda,c}$ and $\widetilde{\Sigma}_{\lambda,c}$ of type I. Here $\lambda=2$ and $c=1$. Left: the surface $\Sigma_{\lambda,c}$, \eqref{rie1}. Right: the   surface $\widetilde{\Sigma}_{\lambda,c}$ after vertical translations of $\Sigma_{\lambda,c}$. Horizontal  (black) straight-lines are contained on the surface.}\label{fig1}
\end{figure}


\begin{theorem}[Type II] \label{t6}
Let $\Sigma_{\lambda,c}$ be a surface parametrized by 
\begin{equation}\label{rie2}
X(s,\theta)=\left(\frac{-1+\cos\theta}{\lambda s+c},\frac{\sin\theta}{\lambda s+c},s\right),\quad s\in I:=(-\frac{c}{\lambda},\infty),\theta\in\r.
\end{equation}
Here 
$$a(s)=-\frac{1}{\lambda s+c} ,\quad r(s)=\frac{1}{\lambda s+c},\quad c>0.$$
\begin{enumerate}
\item The surface $\Sigma_{\lambda,c}$ lies contained in the half-space   $\{(x,y,z)\in\r^3:  z> -\frac{c}{\lambda}\}$. 
\item The surface $\Sigma_{\lambda,c}$ is asymptotic to the $z$-axis.
\item All circles of the foliation touch the $z$-axis at exactly only point. 
\item The intersection of $\Sigma_{\lambda,c}$ with the plane  $z=-\frac{c}{\lambda}$ is  the  straight-line $L$ of equation $\{x=0,z=-\frac{c}{\lambda}\}$.
\item The surface $\Sigma_{\lambda,c}$  can be extended by the $180^0$-rotation about  $L$  obtaining a   stationary surface $\widetilde{\Sigma}_{\lambda,c}$. This surface  is foliated by circles and lines in parallel planes. The line only occurs at the horizontal plane   $z= -\frac{c}{\lambda}$  and the circles in the rest of horizontal planes.
\item The horizontal plane   $z=-\frac{c}{\lambda}$  is an end of $\widetilde{\Sigma}_{\lambda,c}$ .

\end{enumerate}
\end{theorem}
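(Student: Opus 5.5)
The plan is to argue directly from the explicit parametrization \eqref{rie2}, as in the proof of Thm.~\ref{t5}, treating the six items in order.

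Items (1)--(3) are immediate from \eqref{rie2}.

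For (1), the domain of $s$ is $I=(-c/\lambda,\infty)$, and the third coordinate equals $s$. Hence every point satisfies $z>-c/\lambda$.

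For (2) and (3), the key observation is that $a(s)=-r(s)$. The circle at height $s$ therefore has center $(-r(s),0,s)$ and radius $r(s)$. Its points are $(r(s)(\cos\theta-1),r(s)\sin\theta,s)$, and such a point lies on the $z$-axis exactly when $\cos\theta=1$. So each circle of the foliation touches the $z$-axis at exactly one point, $X(s,0)=(0,0,s)$, which is (3). Moreover $r(s)=1/(\lambda s+c)\to0$ as $s\to\infty$, so the circles shrink to the points $(0,0,s)$. Since the distance from any point of the circle at height $s$ to the $z$-axis is at most $2r(s)\to 0$, the surface is asymptotic to the $z$-axis, which is (2).

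For (4), I would mimic item (4) of Thm.~\ref{t5}. Fix $y\in\r$ and take $\theta=y/r(s)$. As $s\to -c/\lambda^{+}$ we have $r(s)\to\infty$, and then
$$X\Bigl(s,\frac{y}{r(s)}\Bigr)=\Bigl(r(s)\bigl(\cos\tfrac{y}{r(s)}-1\bigr),\,r(s)\sin\tfrac{y}{r(s)},\,s\Bigr)\to\Bigl(0,y,-\frac{c}{\lambda}\Bigr),$$
because $r(\cos(y/r)-1)=O(y^2/r)\to0$ and $r\sin(y/r)\to y$. Thus every point of $L$ is a limit point of $\Sigma_{\lambda,c}$. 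Conversely, I would note that the limit points at height $-c/\lambda$ with finite $x$ must satisfy $x\to0$: the circles $x^2+2rx+y^2=0$ converge, for bounded $(x,y)$, to the line $x=0$. So the boundary of $\Sigma_{\lambda,c}$ at that height is exactly $L$.

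For (5), $\Sigma_{\lambda,c}$ is locally a graph $z=u(x,y)$ of a harmonic function ($\Lambda=0$), and it is bounded by the horizontal straight line $L$ on which $u$ is constant. Since $\nu_3\neq0$, the surface is a graph near $L$. After translating $L$ to a line in the plane $z=0$, the function $u+c/\lambda$ vanishes on that line, so the Schwarz reflection principle for harmonic functions extends it by odd reflection across the line $x=0$ in the $xy$-plane. Geometrically this is the $180^0$-rotation about $L$, namely $(x,y,z)\mapsto(-x,y,-2c/\lambda-z)$. The rotated piece is again foliated by circles in horizontal planes, with centers $(r(s),0,\cdot)$, and together with $L$ it gives a stationary surface $\widetilde{\Sigma}_{\lambda,c}$ foliated by circles and the single line at $z=-c/\lambda$.

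For (6), fix $\theta\neq0$ and let $s\to -c/\lambda^+$. Then $|X(s,\theta)|\to\infty$ while $z\to -c/\lambda$, so the surface goes to infinity asymptotically to the plane $z=-c/\lambda$. Hence this plane is an end of $\widetilde{\Sigma}_{\lambda,c}$.

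The main obstacle I expect is in (5): justifying the reflection rigorously. The limit set along $L$ is reached only in a noncompact way, since $r\to\infty$, so one must check that $\Sigma_{\lambda,c}\cup L$ is a smooth graph up to $L$ near each point of $L$, with $u$ continuous and equal to $-c/\lambda$ there. I would handle this by using the implicit description of the surface, $(x+r)^2+y^2=r^2$ with $r=1/(\lambda z+c)$, that is,
$$(\lambda z+c)(x^2+y^2)+2x=0,$$
which gives
$$z=-\frac{c}{\lambda}-\frac{2x}{\lambda(x^2+y^2)}.$$
This function is explicitly harmonic away from the origin, equals $-c/\lambda$ on $x=0$, and is odd in $x$ about that value. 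The extension and the rotational symmetry then follow directly, and this formula can also be used to double-check (1)--(4).
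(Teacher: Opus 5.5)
Your proposal is correct and follows the paper's proof item by item: the domain of $s$ for (1), $X(s,0)=(0,0,s)$ with $r\to0$ for (2)--(3), the limit along $\theta=y/r(s)$ for (4), and Schwarz reflection of the harmonic height function for (5). Your closed form $z=-\frac{c}{\lambda}-\frac{2x}{\lambda(x^2+y^2)}$ is a real improvement, since it turns (5) into a direct check (oddness in $x$) and shows the end at $z=-c/\lambda$ is planar. One correction: $\Sigma_{\lambda,c}$ contains the vertical half-line $\{(0,0,z):z>-c/\lambda\}$, where $\nu_3=0$, so it is not a graph near the point $(0,0,-c/\lambda)\in L$, and your reflection argument only covers $L$ minus that point. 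To handle that point, note that $F=(\lambda z+c)(x^2+y^2)+2x$ has nowhere-vanishing gradient, so $\widetilde{\Sigma}_{\lambda,c}=\{F=0\}$ is a smooth embedded surface containing all of $L$ and the whole $z$-axis; it is invariant under $(x,y,z)\mapsto(-x,y,-2c/\lambda-z)$ because $F$ changes sign under this map.
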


\begin{proof}
The surface $X(s,\theta)$ is depicted in Fig. \ref{fig2}.
\begin{enumerate}

\item It is a consequence that the domain of the $s$-variable is   $I=(-\frac{c}{\lambda},\infty)$. 
\item For any $\theta\not=0$, $\lim_{s\to\infty}X(s,\theta)=(0,0,\infty)$.
  \item For all $s\in I$, we have $X(s,0)=(0,0,s)$.

\item Let $(0,y,\frac{\pi}{2c})$ be   an arbitrary point of $L$. Then  
$$\lim_{s\to-\frac{c}{\lambda}}X(s, \frac{y}{r(s)})=(0,y,-\frac{c}{\lambda}).$$
\item  It is consequence of  the Schwarz reflection principle about the line $L$ and that the coordinate function $z=z(x,y)$ is harmonic on the surface. 
\item Immediate.
\end{enumerate}
\end{proof}

\begin{figure}[hbtp]
\centering
\includegraphics[width=.35\textwidth]{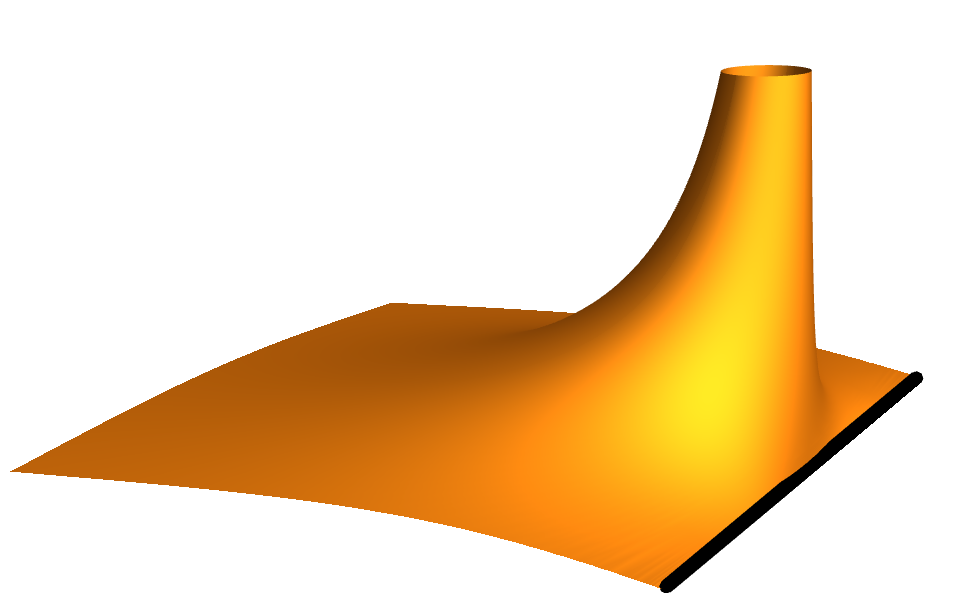}\qquad \includegraphics[width=.35\textwidth]{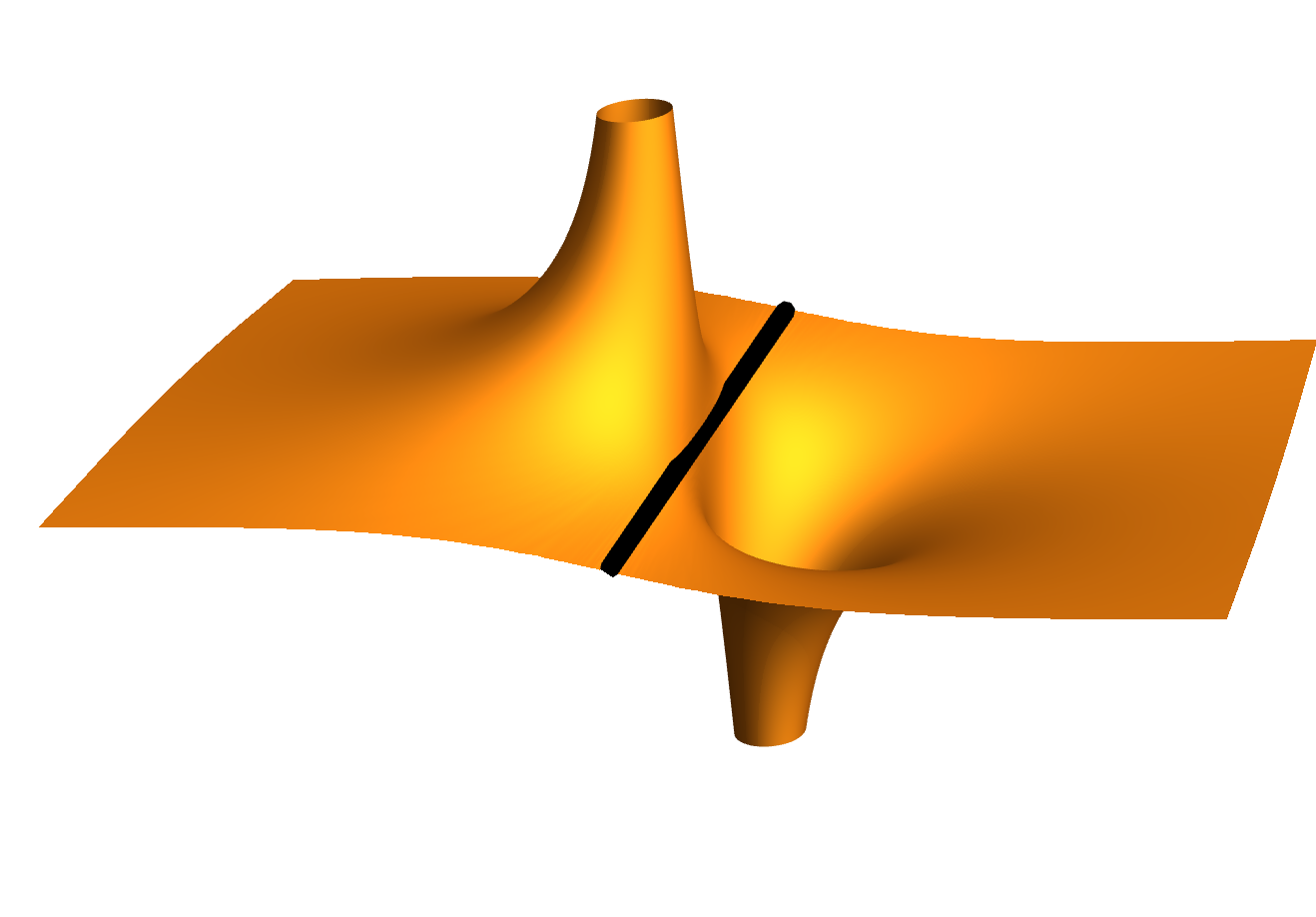}
\caption{Surfaces $\Sigma_{\lambda,c}$ and $\widetilde{\Sigma}_{\lambda,c}$ of type II. Here $\lambda=1$ and $c=0$. Left: the  surface $\Sigma_{\lambda,c}$, \eqref{rie2}. Right: the   surface $\widetilde{\Sigma}_{\lambda,c}$ after the $180^0$ rotation about $L$ of $\Sigma_{\lambda,c}$. }\label{fig2}
\end{figure}

\begin{remark} Notice that the surface \eqref{rie2} contains the $z$-axis, which it is a vertical line. However, we cannot assert that $\Sigma_{\lambda,c}$ may be extended by reflections across the $z$-axis because the coordinate function $y=y(x,z)$ on the surface is not harmonic in general.
\end{remark}


\begin{theorem}[Type III] \label{t7}
Let $\Sigma_{\lambda,c}$ be a surface parametrized by 
\begin{equation}\label{rie3}
X(s,\theta)=\left(\frac{c}{\lambda} \frac{ \cosh(cs)+\cos\theta}{ \sinh (cs)},  \frac{ c}{ \lambda \sinh (cs)}\sin\theta,s\right),\quad s\in I:=(0,\infty),\theta\in\r.
\end{equation}
Here 
$$a(s)=\frac{c}{\lambda}\coth(cs),\quad r(s)=\frac{c}{\lambda\sinh(cs)},\quad c>0.$$
\begin{enumerate}
\item The surface $\Sigma_{\lambda,c}$ lies contained in the half-space   $\{(x,y,z)\in\r^3:  z> 0\}$. 
\item The surface $\Sigma_{\lambda,c}$ is asymptotic to the vertical line $\{x=\frac{c}{\lambda},y=0\}$.
 \item The intersection of $\Sigma_{\lambda,c}$ with the plane  $z=0$ is  the  $y$-axis.
\item The surface $\Sigma_{\lambda,c}$  can be extended by the $180^0$-rotation about  the $y$-axis  obtaining a   stationary surface $\widetilde{\Sigma}_{\lambda,c}$. This surface  is foliated by circles and lines in parallel planes. The line only occurs at the horizontal plane   $z= 0$  and the circles in the rest of horizontal planes.
\item The horizontal plane   $z=0$  is an end of $\widetilde{\Sigma}_{\lambda,c}$ .

\end{enumerate}
\end{theorem}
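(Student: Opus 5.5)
The plan is to follow the proofs of Thms. \ref{t5} and \ref{t6}, working directly with the parametrization \eqref{rie3}, where $a(s)=\frac{c}{\lambda}\coth(cs)$ and $r(s)=\frac{c}{\lambda\sinh(cs)}$ for $s\in I=(0,\infty)$. Item (1) follows at once from the domain: the third coordinate of $X(s,\theta)$ equals $s$, and $s>0$ (the functions $a$ and $r$ blow up at $s=0$). For item (2), I will fix $\theta$ and let $s\to\infty$. Since $\coth(cs)\to 1$ and $1/\sinh(cs)\to 0$, we get $a(s)\to c/\lambda$ and $r(s)\to 0$. So $X(s,\theta)-(c/\lambda,0,s)\to 0$ uniformly in $\theta$, and the circles shrink onto the vertical line $\{x=c/\lambda,y=0\}$.

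Item (3) is the key limit computation, and I will model it on item (4) of Thm. \ref{t5}. Take an arbitrary point $(0,y,0)$ of the $y$-axis and set $\theta(s)=\pi-y/r(s)$ as $s\to 0^+$. Then
$$X(s,\theta(s))=\Big(a(s)-r(s)\cos\big(\tfrac{y}{r(s)}\big),\ r(s)\sin\big(\tfrac{y}{r(s)}\big),\ s\Big).$$
The second coordinate tends to $y$ because $r(s)\to\infty$. For the first coordinate I will write
$$a(s)-r(s)\cos\big(\tfrac{y}{r}\big)=(a-r)+r\big(1-\cos(y/r)\big).$$
Here $a-r=\frac{c}{\lambda}\frac{\cosh(cs)-1}{\sinh(cs)}=\frac{c}{\lambda}\tanh(cs/2)\to 0$. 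The second term is $O(y^2/r)$, which also tends to $0$. This shows the $y$-axis lies in the closure of $\Sigma_{\lambda,c}$ at height $z=0$.

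For the converse, I will check that no other points of $\{z=0\}$ are limit points. Every point of the circle at height $s$ satisfies $x\geq a(s)-r(s)=\frac{c}{\lambda}\tanh(cs/2)>0$. Any limit point at height $0$ with $x\neq 0$ would therefore need $s\to 0$ with $x$ bounded. But $(x-a)^2+y^2=r^2$ together with $a,r\to\infty$ and $a-r\to 0$ forces $x\to 0$ for bounded $y$. Expanding $(x-a)^2-r^2=(x-a-r)(x-a+r)$ is the step where the sign bookkeeping needs care. I expect this to be the only mildly delicate point.

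Item (4) follows, as in Thms. \ref{t5} and \ref{t6}, from the Schwarz reflection principle: the height function $z=z(x,y)$ is harmonic on the graph pieces and vanishes along the boundary line, which is the $y$-axis. The $180^0$-rotation $(x,y,z)\mapsto(-x,y,-z)$ therefore extends the surface to a stationary surface $\widetilde{\Sigma}_{\lambda,c}$. Its horizontal sections are the rotated circles for $z<0$, the original circles for $z>0$, and the line at $z=0$. Finally, item (5) follows from the fact that for fixed $\theta\neq\pi$ the point $X(s,\theta)$ tends to $(\infty,\infty,0)$ in the sense of Thm. \ref{t5}(6) as $s\to0^+$, since $a+r\cos\theta\approx \frac{1+\cos\theta}{\lambda s}\to\infty$. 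Hence the surface is asymptotic to the plane $z=0$, and by the rotation so is the lower half.
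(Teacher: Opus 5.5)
Your proposal is correct and follows essentially the same route as the paper. You use the domain for (1), the limit $s\to\infty$ for (2), the limit along $\theta=\pi-y/r(s)$ as $s\to 0^+$ for (3), Schwarz reflection of the harmonic height function for (4), and divergence of $X(s,\theta)$ as $s\to 0^+$ for (5); the details you add that the paper omits all check out, namely the identity $a-r=\frac{c}{\lambda}\tanh(cs/2)$, the uniform convergence in (2), and the reverse inclusion in (3).
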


\begin{proof}
The surface $X(s,\theta)$ is depicted in Fig. \ref{fig3}.
\begin{enumerate}

\item It is a consequence that the domain of the $s$-variable is   $I=(0,\infty)$. 
\item We have $\lim_{s\to\infty}X(s,\theta)=(\frac{c}{\lambda},0,\infty)$.
 
\item Let $(0,y,0)$ be   an arbitrary point of the $y$-axis. Then  
$$\lim_{s\to 0}X(s, \pi-\frac{y}{r(s)})=(0,y,0).$$
\item  It is consequence of  the Schwarz reflection principle about the $y$-axis. 
\item Immediate.
\end{enumerate}
\end{proof}

\begin{remark} Notice that each one of the family of surfaces of (2) of Thm. \ref{T1} depends only on one parameter because after a dilation by a positive ratio, one of the constants $\lambda$ or $c$ can be eliminated.
\end{remark}

\begin{figure}[hbtp]
\centering
 \includegraphics[width=.35\textwidth]{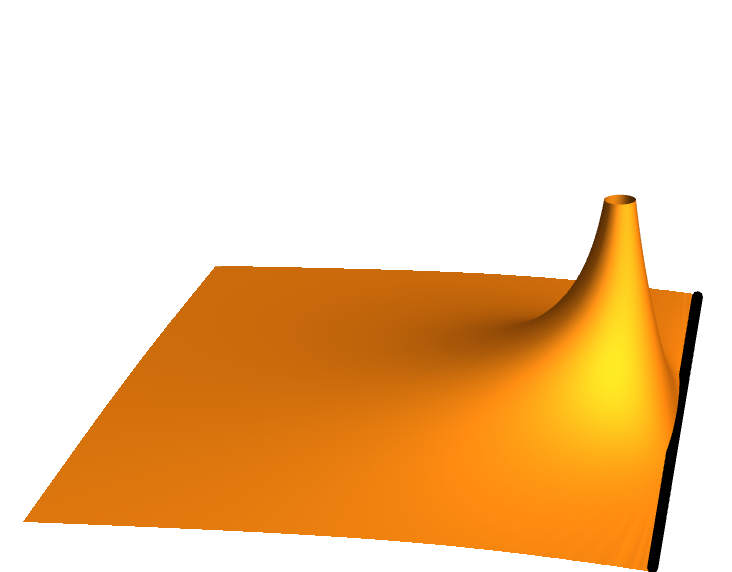}\qquad \includegraphics[width=.35\textwidth]{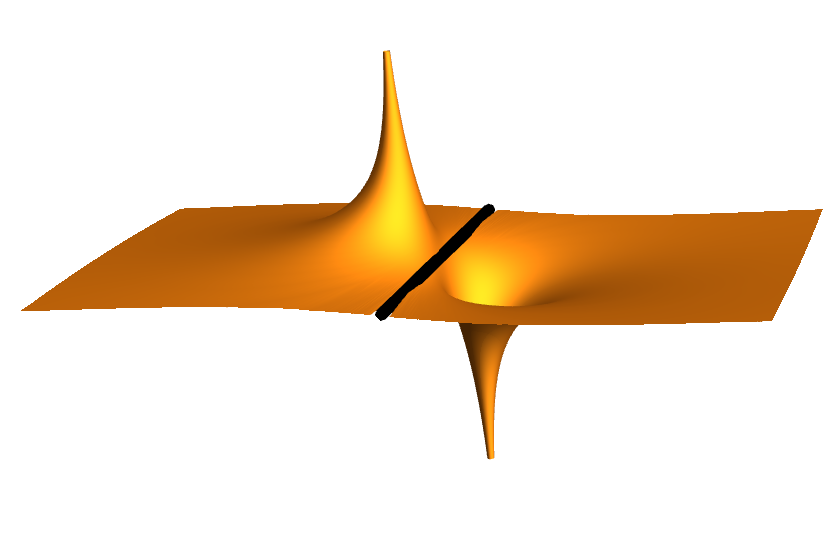}
 \caption{Surfaces $\Sigma_{\lambda,c}$ and $\widetilde{\Sigma}_{\lambda,c}$ of type III. Here $\lambda=1$ and $c=0$. Left: the  surface $\Sigma_{\lambda,c}$, \eqref{rie3}. Right: the   surface $\widetilde{\Sigma}_{\lambda,c}$ after the $180^0$ rotation about $L$ of $\Sigma_{\lambda,c}$.}\label{fig3}
\end{figure}
 
 In Fig. \ref{fig4} we show the intersection of the three types of surfaces  with the vertical plane $y=0$. 
  
  \begin{figure}[hbtp]
\centering
\includegraphics[width=.42\textwidth]{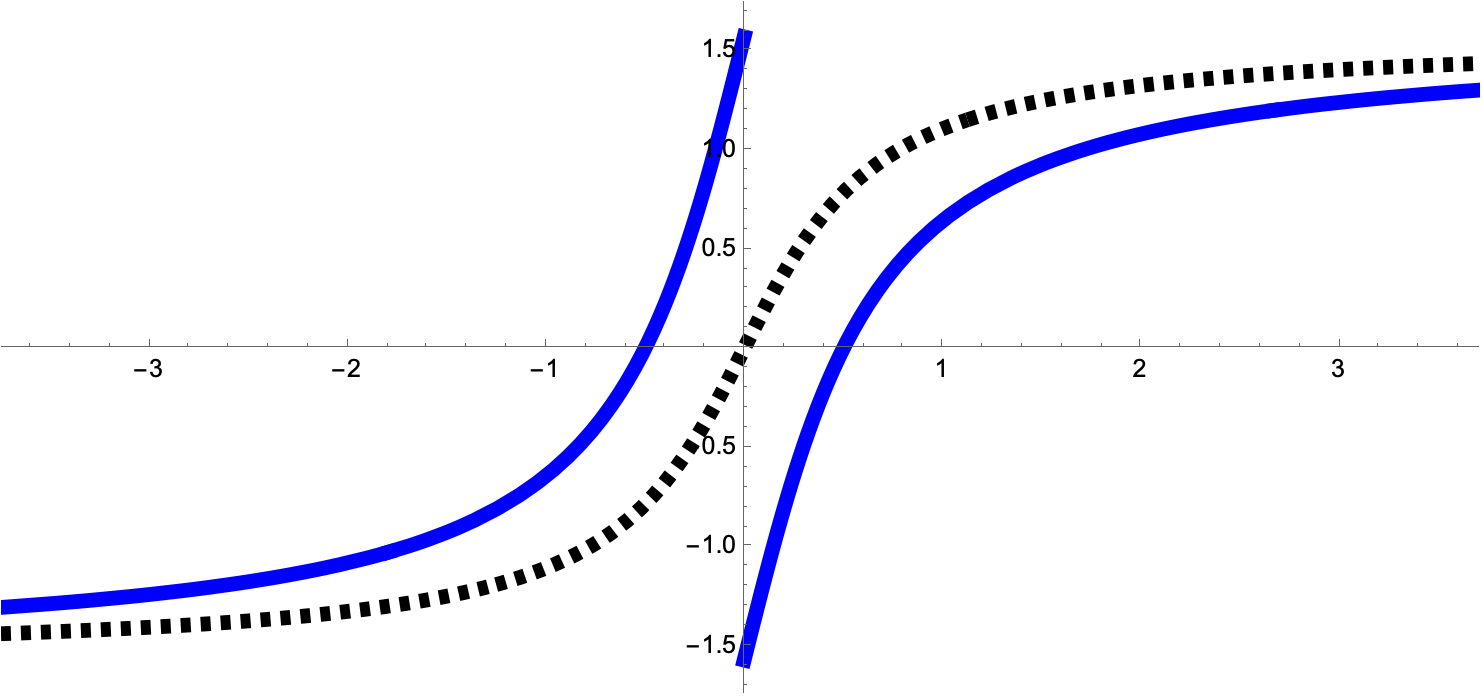} \quad
\includegraphics[width=.27\textwidth]{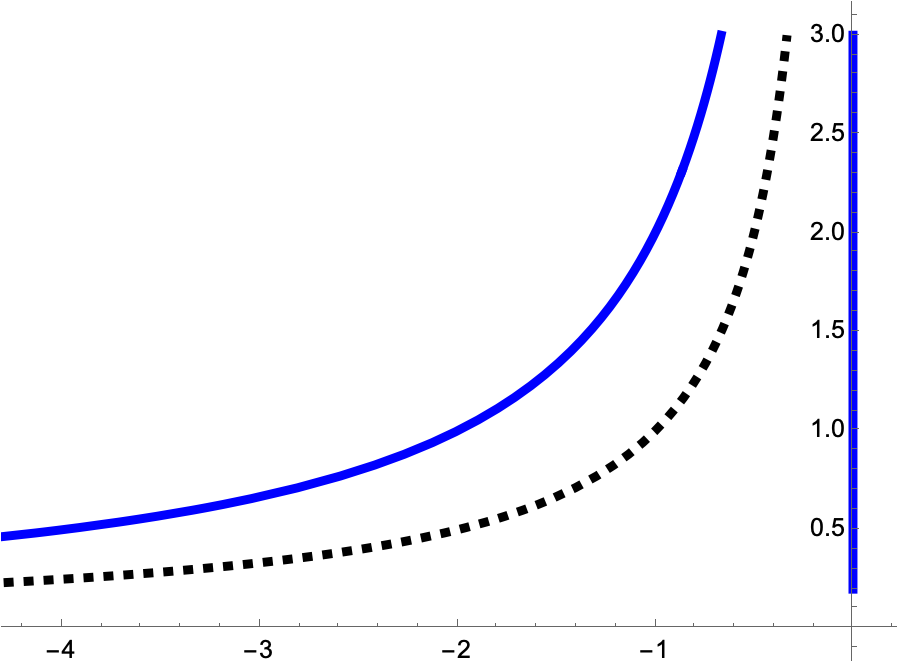} \quad
\includegraphics[width=.2\textwidth]{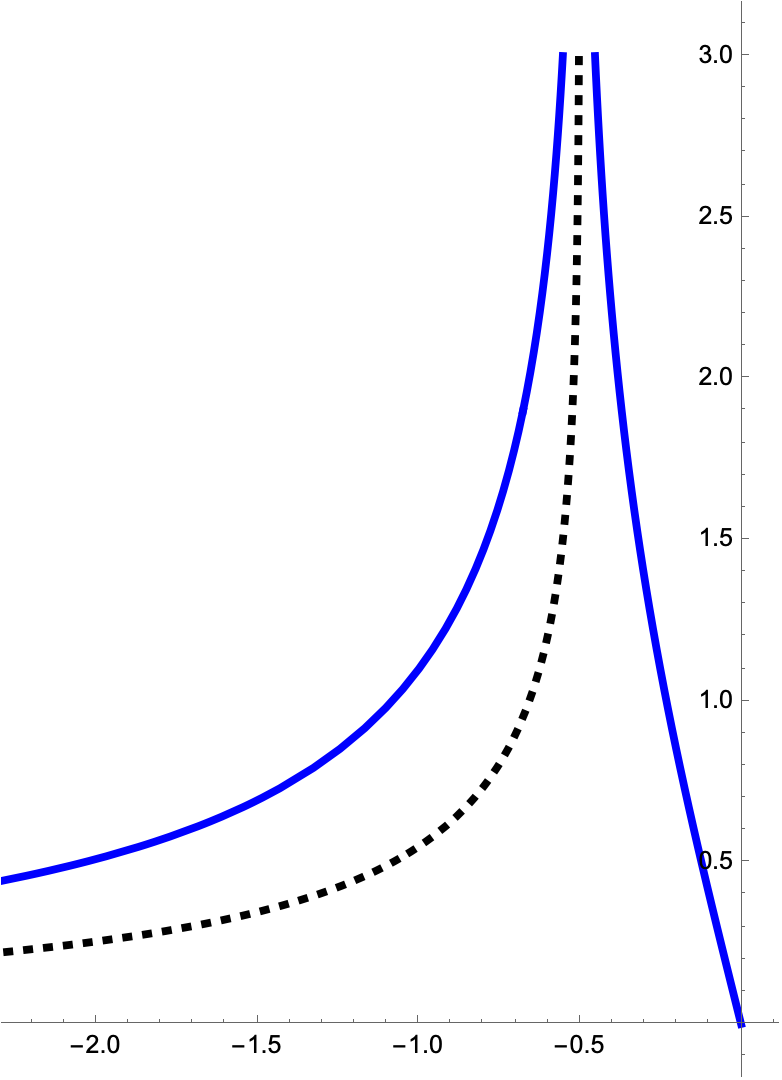} 
\caption{Cross-section of the surfaces \eqref{rie1} (left), \eqref{rie2} (middle) and \eqref{rie3} (right) with the plane $y=0$. The dashed line is the curve of centers $\c(s)$ of the circles. The blue lines represents the intersection of the surface with the plane $y=0$.  }\label{fig4}
\end{figure}
\section{Proof of Theorem \ref{T2}}\label{s4}

 The separate the proof of Thm. \ref{T2} in two subsections.  We   first prove that the planes of the foliation must be parallel. Next, we will show   that these planes are parallel to the $xy$-plane.

\subsection{Claim: the planes of the foliation are parallel}

In this part, we will assume that $\Sigma$ is a CAMC surface with $\Lambda$ a constant not necessarily $0$. Once we prove this claim, Theorem \ref{T3} is also proved thanks to Thm. \ref{T1}.

  The proof  is by contradiction. Suppose that the planes are not parallel. Consider $s$ the parameter of the foliation and let $\Gamma\colon I\subset\r\to\r^3$, $\Gamma=\Gamma(s)$,  be a smooth curve which is orthogonal to each $s$-plane. Without loss of generality, we can suppose that $s$ is the arc-length parameter of $\Gamma$. Since the planes of the foliation are not parallel, then the curve $\Gamma$ is not a straight-line, in particular, the curvature $\kappa$ of $\Gamma$ is non-zero. This also implies the existence of a Frenet frame    $\{\t,\n,\b\}$, where $\t(s)=\Gamma'(s)$ and $\n$ and $\b$ are the normal and binormal vector of $\Gamma$, respectively. The Frenet equations of $\Gamma$ are
\begin{equation*}
\begin{split}
\t'&=\kappa\n,\\
\n'&=-\kappa\t+\tau\b,\\
\b'&=-\tau\n,
\end{split}
\end{equation*}
where  $\tau$ is the torsion of $\Gamma$. According to $\Gamma$,  a parametrization of  $\Sigma$ is
$$X(s,\theta)=\c(s)+r(s)(\cos\theta\,\n(s)+\sin\theta\,\b(s)),$$
where  $\c(s)$ and $r(s)>0$ are the center and the radius, respectively, of each circle of the foliation. Let us express the derivative $\c'(s)$ in terms of the Frenet frame, 
\begin{equation}\label{cc}
\c'=\alpha\t+\beta\n+\gamma\b,
\end{equation}
where $\alpha,\beta,\gamma$ are smooth functions on $s$. 

We now compute all terms in Eq. \eqref{h3}.    Using \eqref{cc}, the tangent basis $\{X_s,X_\theta\}$ is expressed in coordinates with respect to the Frenet frame by  
\begin{equation*}
\begin{split}
X_s&=(\alpha - r \kappa\cos \theta  ,  r'\cos \theta-r\tau \sin\theta    +\beta , r'\sin \theta  +r \tau\cos \theta   +\gamma ),\\
X_\theta&=(0,-r\sin \theta   ,r\cos \theta  ).
\end{split}
\end{equation*}
The unit normal is 
$$\nu=\frac{1}{\sqrt{W}}\left(r'+\beta\cos \theta  +\gamma \sin \theta  ,\cos \theta ( r \kappa\cos \theta -\alpha ),\sin \theta (r \kappa\cos \theta  -\alpha )\right),$$
where
$$W= (r'+\beta\cos \theta  +\gamma\sin \theta   )^2+\cos ^2\theta (r \kappa\cos \theta-\alpha   )^2+\sin ^2\theta ( r \kappa\cos \theta-\alpha  )^2.$$
Notice that $\mbox{det}(g_{ij})=r^2W$. We now express $e_3$  in coordinates with respect to the Frenet frame. Let $e_3=(e_{11},e_{22},e_{33})$. We point out that these coordinates are not constant in general. Then
\begin{equation}\label{ee}
E_1=(e_{11},e_{22},e_{33})-\langle\nu,e_3\rangle\nu,\quad E_2=\nu\times E_1.
\end{equation}
 Following \eqref{e1e2}, let   write $\{E_1,E_2\}$ in coordinates with respect to $\{X_s,X_\theta\}$.  
The calculations for $c_{ij}$ are some tedious and Mathematica can help. For example,   expressions of $c_{21}$ and $c_{22}$ are simple, namely, 
 
\begin{equation*}
\begin{split}
 c_{21}&=\frac{1}{\sqrt{W}}(e_{22}\sin\theta-e_{33}\cos\theta),\\
 c_{22}&=\frac{1}{r\sqrt{W}}\left(  (\alpha-r\kappa\cos\theta)e_{11}+(\beta-r\tau\sin\theta+r'\cos\theta)e_{22}+(\gamma+r\tau\cos\theta+r'\sin\theta)e_{33} \right).
 \end{split}
\end{equation*}
Using these $c_{ij}$'s, equation \eqref{h2} can be written as
\begin{equation}\label{ab}
\sum_{n=0}^{10}A_n(s)\cos(n\theta)+B_n(s)\sin(n\theta)=0,
\end{equation}
where it is possible to compute all coefficients $A_n$ and $B_n$. We begin for $n=10$, obtaining
\begin{equation*}
\begin{split}
A_{10}&=-\frac{1}{512} e_{22}(e_{22}^4-10 e_{22}^2 e_{33}^2+5 e_{33}^4)\Lambda r^6\kappa^6,\\
B_{10} &=- \frac{1}{512}e_{33}(5e_{22}^4-10 e_{22}^2 e_{33}^2+ e_{33}^4)\Lambda r^6\kappa^6.\\
\end{split}
\end{equation*}
If $\Lambda\not=0$, the equations $A_{10}=0$ and $B_{10}=0$ imply $e_{22}=e_{33}=0$. In case $\Lambda=0$, the coefficients for $n=9$ are
\begin{equation*}
\begin{split}
A_{9}&=-\frac{1}{128}  (e_{22}^4-6 e_{22}^2 e_{33}^2+  e_{33}^4)  r^5\kappa^5,\\
B_{9} &= -\frac{1}{32} e_{22}e_{33}(e_{22}^2-e_{33}^2)   r^5\kappa^5.\\
\end{split}
\end{equation*}
Then $A_9=0$ and $B_9=0$ gives   $e_{22}=e_{33}=0$ again. Thus, regardless if $\Lambda$ is $0$ or not,   we conclude that the vector $e_3$ writes as $e_3=(e_{11},0,0)$. Since $e_3$ is unitary, then  $e_{11}=\pm 1$. This implies $\t=\pm e_3$ is constant and, consequently  $\Gamma$ is a straight-line, which it is a contradiction.

\subsection{Claim: if  the planes of the foliation are parallel,   then they are parallel to the $xy$-plane}\label{32}

Once we have proved that all planes of the foliation are parallel, the next step is to prove that these planes are parallel to the $xy$-plane. Now, we are assuming $\Lambda=0$.  Let $\mathcal{B}=\{v_1,v_2,v_3\}$ be an orthonormal basis of $\r^3$ such that the planes of the foliation are orthogonal to $v_3$. We parametrize $\Sigma$ in coordinates with respect to $\mathcal{B}$.  The expression of a parametrization $X$ is as in \eqref{para} with the convention that  the coordinates of $X$ and $\nu$ are with    respect to $\mathcal{B}$. Then the vector $e_{3}$ as well as of $E_1$ and $E_2$ in \eqref{ee} must be written in coordinates with respect to $\mathcal{B}$.  We follow the same computations. 

After a rotation about $v_3$, we can assume that $e_3$ is contained in the $(v_1,v_3)$-plane. Le $e_3=(e_{11},0,e_{33})$.  The claim    is proved if we see that $e_{11}=0$. 

Let $W$ and $\nu$ as in  \eqref{w} and \eqref{nn}, respectively. Again $E_1=(e_{11},0,e_{33})-\langle\nu,e_3\rangle\nu$ and $E_2=\nu\times E_1$, where 
$$\nu_3=\langle\nu,e_3\rangle=\frac{1}{\sqrt{W}}(-e_{11}\cos\theta+e_{33}(r'+a'\cos\theta+b'\sin\theta)).$$

The coefficients $c_{ij}$ in \eqref{e1e2} are
\begin{equation*}
\begin{split}
c_{11}&=\frac{1}{W}(e_{33}+(e_{11} \cos \theta)  (a'\cos \theta  +b'\sin \theta  +r' )),\\
c_{12}&=\frac{1}{2rW}(a'  (-2b'  e_{11} \cos \theta+3r'  -e_{11} \sin (2 \theta))+2 e_{33} \sin \theta ) \\
&+b' \left(r' (e_{11} (\cos (2 \theta)-3)-2 e_{33} \cos \theta\right)-2 e_{11} \sin \theta b'^2-2e_{11}\sin\theta  (r'^2+1 )  ),\\
c_{21}&=\frac{1}{\sqrt{W}}e_{11} \sin \theta,\\
c_{22}&=\frac{1}{r\sqrt{W}}(e_{11} a' +r' e_{11} \cos \theta+e_{33}).
\end{split}
\end{equation*}

Equation \eqref{h2} writes as 
$$\sum_{n=0}^5 A_n(s)\cos(n\theta)+B_n(s)\sin(n\theta)=0.$$

We have 
$$A_5=-\frac{1}{8} e_{11}^2 r \left(2 b' b'' \left(e_{11} e_{33}-(e_{33}^2-1) a'\right)+a'' \left(-2 e_{11} e_{33} a'+(e_{33}^2-1) a'^2-\left(e_{33}^2-1\right) b'^2+e_{11}^2\right)\right),$$
$$B_5=\frac{1}{8} e_{11}^2 r \left(2 a'' b' \left(e_{11} e_{33}-(e_{33}^2-1) a'\right)-b'' \left(-2 e_{11} e_{33} a'+(e_{33}^2-1) a'^2-(e_{33}^2-1) b'^2+e_{11}^2\right)\right).$$
From $A_5=0$ and $B_5=0$, we can do linear combinations in order to eliminate $a''$ and $b''$. Since the arguments are interchangeable between $a''$ and $b''$, we  eliminate $b''$ obtaining
\begin{equation}\label{pa}
e_{11}^4 r^2 a'' \left(\left(e_{11}-(e_{33}-1) a'\right)^2+(e_{33}-1)^2 b'^2\right) \left(\left(e_{11}-(e_{33}+1) a'\right)^2+(e_{33}+1)^2 b'^2\right)=0.
\end{equation}
If $e_{11}=0$, the result is proved. From now, we will assume $e_{11}\not=0$ and we will arrive to a contradiction. The above identity \eqref{pa} gives the following discussion of cases. 
\begin{enumerate}
\item Case $a''=0$. If $a(s)=a_1s+a_0$, then 
$$A_5=-\frac14e_{11}^3rb'b''(e_{11}a_1+e_{33}).$$
\begin{enumerate}
\item Subcase $b''=0$. Let $b(s)=b_1s+b_0$. We obtain 
\begin{equation*}
\begin{split}
A_4&=\frac{1}{4} e_{11}^3((a_1^2-b_1^2-1)e_{11}+2a_1 e_{33}) (r r''+r'^2),\\
B_4&=\frac{1}{2} b_1e_{11}^3(a_{1}e_{11}+e_{33}) (r r''+r'^2 ).
\end{split}
\end{equation*}
\begin{enumerate}
\item Case  $rr''+r'^2=0$. The solutions are $r(s)=r_0>0$ and $r(s)=c_2\sqrt{2s+c_2}$, $c_1>0$.
\begin{enumerate}
\item Case   $r(s)=r_0>0$.   Then 
$$B_2=2 b_1e_{11} (a_1 e_{11}+e_{33}) \left(1+b_1^2-a_1e_{11}e_{33}+e_{33}^2(a_1^2-1)\right).$$
Equation $B_2=0$ gives three cases.
\begin{itemize}
\item If $b_1=0$, then $A_2=(e_{11}-a_1 e_{33})^2(e_{33}+a_1 e_{11})(e_{33}+a_1 e_{11}-1)$. If one of the above factors vanishes, then  $\nu_3=0$ which it is not possible.   
\item If $a_1 e_{11}+e_{33}=0$ and $b_1\not=0$, then $A_3=-e_{11}^2(1+a_1^2+b_1^2)$ and $A_3=0$ gives a contradiction.
\item Suppose  the parenthesis of $B_2$  vanishes and $b_1\not=0$ and $a_1e_{11}+e_{33}\not=0$. If $a_1\not=0$, then we get $e_{33}=(a_1^2+b_1^2+(1-a_1^2)e_{11}^2)/(2 a_1 e_{11})$. With this value of $e_{33}$, identity $e_{11}^2+e_{33}^2-1=0$ gives a contradiction. Thus $a_1=0$. Hence $B_2=2b_1e_{11}e_{33}(1+b_1^2-e_{33}^2)$. Since $e_{33}^2\leq 1$ and $b_1\not=0$, we conclude $e_{33}=0$. In particular, $e_{11}^2= 1$. But now $A_2=-(1+b_1^2)^2$, obtaining a contradiction.
\end{itemize}
\item Case $r(s)=c_2\sqrt{2s+c_2}$, $c_1>0$.. After a translation on the parameter $s$, which it only  changes the value $a_0$ of the function $a(s)=a_1 s+a_0$  to another one, we can assume $c_2=0$. Thus, let $r(s)=c_1\sqrt{s}$ for some $c_1>0$. Then $B_3=0$ simplifies into
\begin{equation}\label{s1}
b_1  e_{11} \left(e_{11}  (1+   b_1^2+2 e_{33}^2 - a_1^2  (2 e_{33}^2+1 ))+2 a_1 e_{11}^2 e_{33}-2 a_1 e_{33} \left(e_{33}^2+1\right)\right)=0.
\end{equation}
\begin{itemize}
\item Case $b_1=0$. Then $A_3=0$ writes 
\begin{equation}\label{s22}
 (a_1e_{33}-e_{11})(e_{11}e_{33}a_1^2+(1-e_{11}^2+e_{33}^2)a_1-e_{11}e_{33})=0.
 \end{equation}
If $ a_1 e_{33}-e_{11}=0$, then $A_2=0$ becomes $a_1^2(a_1^2+2)e_{33}=0$. If $a_1=0$, then \eqref{h2} implies $e_{33}^2(1+e_{33}^2)=0$ hence $\nu_3=0$. If $a_1\not=0$ and $e_{33}=0$ then $\nu_3=0$ again. Definitively,  the second parenthesis in \eqref{s22}vanishes identically. 
A first observation is that it is immediate that $e_{33}=0$ it is not possible. Thus if the parenthesis is $0$, then $e_{11}(a_1^2-1)+2a_1 e_{33}=0$.
 In particular, $a_1^2\not=0,1$. Then $e_{33}=e_{11}(1-a_1^2)/(2a_1)$ and the relation $e_{11}^2+e_{33}^2=1$ yields $e_{11}^2=4 a_1^2/(1+a_1^2)^2$. Now $A_2=0$ implies $a_1^2c_2^2(a_1^2-1)=0$, obtaining a contradiction.
 \item Case $b_1\not=0$.  The coefficient $B_1$ writes 
 $$B_1=\frac{b_1 c_1}{2}\left( \frac{b_{11}}{s^{1/2}}+\frac{c_1^2e_{11}^2(2+e_{33}^2)}{s^{3/2}}\right),$$
 for some constants  $b_{11}$.  Thus $B_1=0$ implies $c_1^2e_{11}^2(2+e_{33}^2)=0$ and this gives a contradiction.  
 \end{itemize}
 \end{enumerate}
\item Case $rr''+r'^2\not=0$. Then $A_4=B_4=0$ imply
\begin{equation*}
\begin{split}
(a_1^2-b_1^2-1)e_{11}+2a_1 e_{33}&=0,\\
 b_1 (a_{1}e_{11}+e_{33})&=0.
 \end{split}
 \end{equation*}
If $b_1\not=0$, the solution of the system is $e_{11}=e_{33}=0$, which it is not possible. Thus $b_1=0$ and $(a_1^2-1)e_{11}+2a_1 e_{33}=0$. The case $a_1=0$ is discarded because $A_4=0$ gives $e_{11} (rr''+r'^2)=0$. Therefore we get $e_{33}=(1-a_1^2)e_{11}/(2a_1)$. Then $A_4=0$ gives $e_{11}^2=4a_1^2/(1+a_1^2)$. Now $A_3=0$ is $a_1^3r'(r'^2+rr'')=0$ which it is not possible.
\end{enumerate}
\item Subcase $b''\not=0$. Then $A_5=0$ implies $e_{33}=-a_1 e_{11}$. Moreover, $e_{11}^2+e_{33}^2=1$ gives $e_{11}^2=1/(1+a_1^2)$.  Now the coefficient $B_5$ is 
$$B_5=\frac{rb''}{8(1+a_1^2)^2}(1+a_1^2+b'^2)$$
and $B_5=0$ gives a contradiction.
\end{enumerate}
\item Case $a''\not=0$. Then one of the two parentheses of \eqref{pa} vanishes identically. We will assume that it is the first one and a similar argument applies if it is   the second one. Since $e_{33}\not=1$ because $e_{11}\not=0$, then $b'=0$ identically and $a'=\frac{e_{11}}{e_{33}-1}$. Since $e_{11}$ and $e_{33}$ are constant because $\mathcal{B}$ is constant, we obtain  $a''=0$, a contradiction. 
 \end{enumerate}
  
 \begin{remark} As we have observed, the general case $\Lambda\not=0$ is very more difficult. We prove in this Remark that it is not possible that  the planes of the foliation are orthogonal to the vector $e_3$. In such a case, and using the above notation, we now have $e_3=(1,0,0)$ in coordinates with respect to $\mathcal{B}$, that is, $e_{11}=1$ and $e_{33}=0$.   Then
  \begin{equation*}
  \begin{split}
  A_5&=\frac{1}{16} r \left(-4 a' b' b''-\left((a'^2-b'^2-1) (\Lambda -2 a'')\right)\right),\\
  B_5&=\frac{1}{8} r \left(b'' (a'^2-b'^2-1)-a' b' (\Lambda -2 a'')\right).
  \end{split}
  \end{equation*}
  Viewing $A_5=0$ and $B_5=0$ as a linear system on $a''$ and $b''$, the determinant of the coefficients is
  $$\frac{1}{64}r^2\left(b'^4+2b'^2(a'^2+1)+(a'^2-1)^2\right).$$
  Then this determinant is positive or, otherwise, $b'^2=0$ and $a'^2=1$. In any of the two cases, we conclude $a''=b''=0$. Thus, 
  $a(s)=a_1 s+a_0$ and $b(s)=b_1 s+b_0$, for constants $a_i,b_i$, $i=1,2$. Now $A_4=-\frac14\Lambda rr'$. Then $A_4=0$  implies that $r$ is a constant function. Then $A_3=-\frac14 r\Lambda$ and $A_3=0$ gives the desirable contradiction.
  \end{remark}

  \section{Proof of Thm. \ref{ROT1}}\label{s5}

Suppose that $\Sigma$ is a surface of revolution. In particular, the planes of the foliation are parallel. Thus we can  follow the same arguments and notation as in Subsect. \ref{32}. The proof is by contradiction and suppose, without loss of generality, that  $e_{11}\not=0$.  Since the surface is rotational, we can assume that $a(s)=b(s)=0$. Recall that $\Lambda$ is any real constant.   Equation \eqref{h2} can be expressed  as 
\begin{equation}\label{pol}
\sum_{n=0}^5 A_n(s)\cos(n\theta)=0.
\end{equation}
For $n=5$, we have $A_5=e_{11}^5\Lambda r/16$. Thus if $\Lambda\not=0$, we obtain $e_{11}=0$ proving the result. 

In the case$\Lambda=0$, we are under the situation of Thm. \ref{T2} where it was proved that the rotation axis is parallel to $e_3$.  

\begin{remark} Notice that the brevity of the proof of Thm. \ref{ROT1} hides the discussion of the case $\Lambda=0$ in Thm. \ref{T2}. As we said in the Introduction, for general axially symmetric anisotropic energies $\mathcal{F}=\mathcal{F}(\nu_3)$, it is not known if the axis of a CAMC surface of revolution is parallel to the $z$-axis.  
\end{remark}

\section*{Acknowledgements}
The author   has been partially supported by Grant PID2023-150727NB-I00 funded by MICIU/AEI/10.13039/501100011033, and ERDF/EU and 
Grant PID2023-150727NB-I00 and Maria de Maeztu Unit of Excellence IMAG, reference CEX2020-
001105-M, funded by MICIU/AEI/10.13039/501100011033, and ERDF/EU. 
 
\section*{Statements and Declarations}  

{\bf Conflict of interest.} The author declares that they have no conflict of interest.

{\bf Data Availability.} Data sharing not applicable to this article as no datasets were generated or analysed during the current study.


\begin{thebibliography}{99}


\bibitem{bs} E. Barbosa and L. C.  Silva, Surfaces of constant anisotropic mean curvature with free boundary in revolution surfaces,  \emph{Manuscr. Math.} \textbf{169} (2022), 439--459.

\bibitem{en1} A. Enneper, \"{U}eber die cyclischen Fl\"{a}chen,   Nach. K\"{o}nigl. Ges. d. Wissensch. G\"{o}ttingen, \emph{Math. Phys. Kl.} (1866) 243--249. 

\bibitem{en2} A. Enneper, Die cyklischen Fl\"{a}chen,   \emph{Z. Math. Phys.} \textbf{14} (1869), 393--421. 

 
 

\bibitem{gmt} J. A. G\' alvez, P. Mira and    M. P. Tassi, Complete surfaces of constant anisotropic mean curvature,  \emph{Adv. Math.} \textbf{428} (2023), Paper No. 109137. 

\bibitem{gx} J. Guo and  C. Xia, Stable anisotropic capillary hypersurfaces in a half-space,   arXiv:2301.03020 [math.DG]  
 


\bibitem{ja1} W. Jagy, Minimal hypersurfaces foliated by spheres,  \emph{Michigan Math. J.} \textbf{38} (1991), 255--270.   
\bibitem{ja2} W. Jagy, Sphere-foliated constant mean curvature submanifolds,  \emph{Rocky Mountain J. Math.} \textbf{28} (1998), 983--1015.

\bibitem{jw} X. Jia, G. Wang and  C. Xia,   X. Zhang, Alexandrov's theorem for anisotropic capillary hypersurfaces in the half-space,  \emph{Arch. Ration. Mech. Anal.} \textbf{ 247} (2023),  25.

\bibitem{kp1} M.  Koiso and  B.  Palmer, Geometry and stability of surfaces with constant anisotropic mean curvature,  \emph{Indiana Univ. Math. J.} \textbf{54} (2005),   1817--1852.


\bibitem{kp2} M.  Koiso and  B.  Palmer, Stability of anisotropic capillary surfaces between two parallel planes,  \emph{Calc. Var. Partial Differ. Equ.} \textbf{25},  (2006), 275--298.


\bibitem{kp3} M.  Koiso and B.  Palmer, Uniqueness theorems for stable anisotropic capillary surfaces,  \emph{SIAM J. Math. Anal.} \textbf{39} (2007), 721--741.

\bibitem{kp4} M.  Koiso and  B.  Palmer, Rolling construction for anisotropic Delaunay surfaces,  \emph{Pacific J. Math.} \textbf{234} (2008),   345--378.

\bibitem{kp5} M.  Koiso and  B.  Palmer, Equilibria for anisotropic surface energies with wetting and line tension,  \emph{Calc. Var. Partial Differ. Equ.}\textbf{43} (2012), 555--587.



\bibitem{lls} F. J. L\'opez, R. L\'opez and    R. Souam, Maximal surfaces of Riemann type in Lorentz-Minkowski space  $L^3$,  \emph{Michigan Math. J.} \textbf{47} (2000),  469--497.


\bibitem{lo2} R. L\'opez,  Constant mean curvature hypersurfaces foliated by spheres,  \emph{Differential Geom. Appl.} \textbf{11} (1999), 245--256. 

\bibitem{lo3} R. L\'opez,  Constant mean curvature surfaces foliated by circles in Lorentz-Minkowski space,  \emph{Geom. Dedicata} \textbf{76} (1999), 81--95.

\bibitem{lo1} R. L\'opez,  Cyclic surfaces of constant Gauss curvature,  \emph{Houston J. Math.} \textbf{27} (2001), 799--805.

\bibitem{lo4} R. L\'opez,  Cyclic hypersurfaces of constant curvature, \emph{Advanced Studies in Pure Mathematics}, \textbf{34}  2002, Minimal Surfaces, Geometric Analysis and Symplectic Geometry, 185--199.


\bibitem{ni} J.C.C. Nitsche, Cyclic surfaces of constant mean curvature,  \emph{Nachr. Akad. Wiss. Gottingen Math. Phys. II} \textbf{1} (1989) 1--5. 
\bibitem{pa} S.-H. Park,  Sphere-foliated minimal and constant mean curvature hypersurfaces in space forms and Lorentz-Minkowski space. \emph{Rocky Mountain J. Math.} \textbf{32} (2002), 1019? 1044.

\bibitem{re} R. C.  Reilly,   The relative differential geometry of nonparametric hypersurfaces, \emph{Duke Math.J.} \textbf{43} (1976),  705--721.


\bibitem{ri} B. Riemann,  \"{U}ber die Fl\"{a}chen vom kleinsten Inhalt bei gegebener Begrenzung,   \emph{Abh. K\"{o}nigl. Ges. d. Wissensch. G\"{o}ttingen, Mathema.} \textbf{13} (1868), 329--333. 

\bibitem{ro} C. Rosales, Compact anisotropic stable hypersurfaces with free boundary in convex solid cones, 
\emph{Calc. Var. Partial Differ. Equ.} \textbf{62} (2023),  Paper No. 185, 20 pp.

\bibitem{ta} J. E. Taylor, Crystalline variational problems,  \emph{Bull. Amer. Math. Soc.} \textbf{84} (1978), 568--588. 

\bibitem{wo} Wolfram Research, Inc. Mathematica,    Version 13.3.  Champaign, IL (2023).


\end{thebibliography}
\end{document}